\algnewcommand{\IfThenElse}[3]{
  \State \algorithmicif\ #1\ \algorithmicthen\ #2\ \algorithmicelse\ #3}
\algnewcommand{\IfThen}[2]{
  \State \algorithmicif\ #1\ \algorithmicthen\ #2}
\algnewcommand{\algorithmicgoto}{\textbf{go to}}%
\algnewcommand{\Goto}[1]{\algorithmicgoto~#1}%
\newtheorem{prop}{Proposition}
\newtheorem{corollary}{Corollary}
\theoremstyle{definition}
\theoremstyle{remark}
\newcommand{\LB}[0]{\mathrm{LB}}
\newcommand{\UB}[0]{\mathrm{UB}}
\begin{document}

\title{Benders Adaptive-Cuts Method for Two-Stage Stochastic Programs}
\author{Cristian Ramírez-Pico}
\address{Faculty of Engineering and Sciences, Universidad Adolfo Ibáñez, Santiago, Chile} \email{cristian.ramirez@edu.uai.cl}
\author{Ivana Ljubi\'c}
\address{ESSEC Business School of Paris, 95021  Cergy-Pontoise, France} \email{ivana.ljubic@essec.edu}
\author{Eduardo Moreno}
\address{Faculty of Engineering and Sciences, Universidad Adolfo Ibáñez, Santiago, Chile} \thanks{Supported by ANID through grants Fondecyt 1200809 and STIC-AmSud STIC19007.}\email{eduardo.moreno@uai.cl}

\date{November 22, 2022}

\begin{abstract}
Benders decomposition is one of the most applied methods to solve two-stage stochastic problems (TSSP) with a large number of scenarios. The main idea behind the Benders decomposition is to solve a large problem by replacing the values of the second-stage subproblems with individual variables, and progressively forcing those variables to reach the optimal value of the subproblems, dynamically inserting additional valid constraints, known as Benders cuts. Most traditional implementations add a cut for each scenario (multi-cut) or a single-cut that includes all scenarios. In this paper we present a novel \emph{Benders adaptive-cuts method}, where the Benders cuts are aggregated according to a partition of the scenarios, which is dynamically refined using the LP-dual information of the subproblems. This scenario aggregation/disaggregation is based on the Generalized Adaptive Partitioning Method (GAPM), which has been successfully applied to TSSPs. We formalize this hybridization of Benders decomposition and the GAPM, by providing sufficient conditions under which an optimal solution of the deterministic equivalent can be obtained in a finite number of iterations. 
Our new method can be interpreted as a compromise between the Benders single-cuts and multi-cuts methods, drawing on the advantages of both sides, by rendering the initial iterations faster (as for the single-cuts Benders) and ensuring the overall faster convergence (as for the multi-cuts Benders).
Computational experiments on three TSSPs (the Stochastic Electricity Planning, Stochastic Multi-Commodity Flow and CVaR Facility Location) validate these statements, showing that the new method outperforms the other implementations of Benders method, as well as other standard methods for solving TSSPs, in particular when the number of scenarios is very large. 
Moreover, our study demonstrates that the method is not only effective for the risk-neutral decision makers, but also that it can be used in combination with the risk-averse CVaR objective.  
\end{abstract}

\keywords{Two-stage Stochastic Programming,  Benders Decomposition, Adaptive-Partition Method, Electricity Planning, Stochastic Multi-Commodity Flow, Conditional Value-at-Risk, Facility Location}
\maketitle

\section{Introduction}\label{sec:introducion}

In 
two-stage linear stochastic programming (TSSP), a class of problems studied in this article,
decisions are split between those made before and after the uncertainty of some modeling parameters is revealed \citep{birge2011introduction}. 
TSSPs are used for modeling many optimization problems that naturally appear in transportation, logistics, or telecommunications. Typical examples include stochastic multi-commodity flow problems, where 
   a set of given commodities has to be routed between different pairs of origin and destination nodes at minimum cost~\citep{gendron1999multicommodity,Barnhart2001}. In a more realistic setting, the demand of these commodities is uncertain, however decisions concerning the network structure including the capacity of its nodes/arcs have to be made \emph{here-and-now} (i.e., in the first-stage, before the actual demand is realized), whereas in the second-stage, once the demand is revealed, the routing of each commodity can be calculated in a \emph{wait-and-see} fashion~\citep{Sarayloo-et-al:2021,rahmaniani2018accelerating}.

TSSPs  are 
particularly challenging 
when the support of the uncertainty space is continuous and high-dimensional since there is usually no closed formula to express 
 the second-stage optimization model. As an alternative, the sample average approximation (SAA) method~\citep{MAK199947, kleywegt2002sample} has been widely used to generate a deterministic equivalent formulation by sampling scenarios to obtain an approximately optimal solution using a (potentially large) number of discrete scenarios. 
  This large number of scenarios, which is needed for obtaining a reliable representation of data uncertainty, is a major challenge for solving practical applications of TSSPs using the SAA technique.
  
 Decomposition methods have shown strong capabilities in the design and implementation of efficient solution algorithms for TSSPs with a large number of scenarios. A large portion of work on these decomposition methods relies on Benders decomposition~\citep{benders1962partitioning}, which was further extended to stochastic programming by \citet{van1969shaped} and \citet{birge1988multicut} under the name of the L-Shaped method. This strategy  consists of executing three steps, namely, projecting, relaxing and linearizing some components of the TSSP, so that the original problem can be solved via an iterative procedure that utilizes the dualization of the projected terms. Dual extreme rays and extreme points of the second-stage subproblems are then used to generate valid inequalities, commonly known as 
feasibility and optimality cuts, respectively. 
When included into the so-called master problem, these constraints result in an alternative valid formulation of the original optimization problem.
We highlight the importance of an iterative method to identify only some of the extreme rays and extreme points since the enumeration of all of them is not practical in computational terms, and most of these extreme points and rays of the dual polyhedron are not active in an optimal solution.

A vanilla implementation of Benders decomposition 
is usually not sufficient to solve  
large TSSPs directly. However, the idea of having a master problem and a set of subproblems solved separately, plus a set of strategies to improve and stabilize the solution obtained throughout the iterations of the algorithm, have made Benders decomposition method (or Benders method, for short) a successful technique. A detailed review on enhancing techniques for Benders methods is presented in~\citet{RAHMANIANI2017801}, from which we highlight   \emph{stabilization}~\citep{Rubiales2013,zaourar2014quadratic,fischetti16,fischetti17}, \emph{valid inequalities}~\citep{SAHARIDIS20116627}, \emph{cut removal} and \emph{cut selection}~\citep{pacqueau2012fast,yang2012tighter}, \emph{parallelization}~\citep{rahmaniani2019asynchronous}, and \emph{normalization}~\citep{fischetti2010}. 

In recent years, in the context of TSSPs, improvements to Benders  methods have focused on aggregating scenarios 
using different distance measures between scenarios. For example, \cite{vandenbussche2019data} use clustering techniques to aggregate scenarios, \cite{trukhanov2010adaptive} take those cuts that marginally contribute at a given iteration and group scenarios associated with them, and \cite{beltran2019fast} proposes aggregation based on conditional expectation over subsets of scenarios. Furthermore, \citet{crainic2021partial} propose to retain some important scenarios in the master problem and to  generate some artificial ones in order to derive strengthening valid inequalities. 
Lastly, \cite{biel2019dynamic} generalize many of previous ideas and deliver a general framework to study different 
scenario aggregations for Benders methods based on these 
distance measures. 

The idea of aggregating scenarios has been  explored not only in the context of Benders decomposition, but also
for solving the deterministic equivalent formulation of two-stage stochastic programs.
In this context, dimension of the deterministic equivalent can be reduced by 
applying scenario 
aggregation using partitions of the scenario set. Such partitions are then iteratively refined to improve the obtained lower and upper bounds.
One of the first methods based on this idea is the \emph{sequential approximation method}~\citep{Birge1986, Frauendorfer1992}. In this method a partition of the scenarios is proposed using rectangular regions of random space. Then, a lower bound of the expected value of the second-stage subproblem is obtained using Jensen's approximation over the conditional expectation with respect to a partition of the scenarios, and an upper bound is obtained using generalizations of the {Edmundson-Mandasky inequality}~\citep{huang1977bounds}. An iterative refinement of the partition is proposed on the cell with the largest difference between these bounds, converging to the optimal solution. A more computationally efficient method is proposed in~\citet{pierre2011combined} replacing the computation of the bound with a variance-reducing  Monte Carlo estimator.

Several recent approaches for aggregating scenarios in order to produce bounds for TSSPs are based on  measuring \emph{similarity} between scenarios. \citet{hewitt2021decision}
introduce the notion of \emph{opportunity costs} of predicting the wrong scenario, and use it to measure the similarity/distance between pairs of scenarios. 
The obtained distance measure is then embedded in a graph structure and graph clustering techniques are applied to partition the scenarios into smaller groups and to derive different upper and lower bounds for the TSSPs. 
\citet{keutchayan2021problem} seek to find a clustering of the scenario set and a representative of each cluster, so that these representatives can be used to solve a smaller approximative TSSP. They  introduce a \emph{discrepancy
   measure} which assesses how well a representative scenario within the cluster matches the average cost.
   The goal is to find a partition of the set of scenarios into $K$ clusters (and a representative of each cluster) so that the discrepancy is minimized. 
    Finally,  \citet{Bertsimas-Mundru:2022} introduce the  concept of problem-dependant divergence as a means to evaluate the difference between scenarios and propose an algorithm to partition the
   scenario set and solve the scenario reduction problem. They also provide conditions under which  their approach for scenario reduction reproduces the SAA results.

Another aggregation method similar to the sequential approximation method but with a different refinement of the partition of scenarios is proposed by \citet{espinoza2014primal} for a risk-minimization portfolio problem. The authors  
group scenarios and disaggregate them by considering only those that are critical 
for the risk measure, i.e., those that have different dual values for a given portfolio solution. Later, \citet{song2015adaptive} generalize this concept in the so-called   \emph{adaptive partition method} (APM), ensuring the existence of a sufficient partition for any two-stage stochastic program with fixed recourse and discrete uncertainty space. 
More recently, \citet{ramirez2021generalized} develop  a generalization of the previous algorithm called the \emph{generalized adaptive partition method} (GAPM), proving the existence of a \emph{finite and sufficient} partition and providing an implementation of the aggregation technique in a more general setting with a continuous uncertainty set. In the simplest case, the idea behind GAPM is to aggregate all scenarios to obtain a smaller and easier master problem, and then iteratively solve the subproblems to disaggregate the scenarios into subsets sharing, e.g., the same dual optimal solutions. Contrary to the similarity-based methods mentioned above, where heuristic partitions and valid bounds are provided, the GAPM guarantees to find an optimal solution of the TSSP using a potentially smaller number of aggregated scenarios.

\paragraph{Our Contribution.}
Following the theory of the GAPM, we formalize the idea of aggregating scenarios  in the Benders decomposition context.  
%
The purpose of this paper is twofold: (1)~to combine two methodologies, i.e., Benders decomposition and the GAPM, that have proven to be successful for solving two-stage stochastic problems and (2) to provide sufficient conditions under which a given set of scenarios can be aggregated without sacrificing the optimality. 
We develop the underlying theory that enables to apply the Benders method to a smaller set of aggregated  scenarios, based on the dual optimal solutions of the subproblems. We name this new approach the \emph{Benders adaptive-cuts method}. 
Due to aggregation, smaller TSSPs are obtained, thus, 
impacting the number of cuts that are needed to solve the problem. 
The aggregated scenario cuts resemble the single-cuts Benders method, the disaggregated ones resemble the multi-cuts Benders method, and using a guided scenario refinement procedure (stemming from the GAPM), we manage to draw on the advantage of both worlds. 
We also show how to deal with infeasible subproblems, thus requiring only a fixed recourse for the TSSPs. 

In our computational study, we focus on three practically relevant examples of stochastic network flow problems, for which Benders decomposition has been successfully implemented in the past.
These problems are: Stochastic Electricity Planning, 
Stochastic Multi-Commodity Flow, and Stochastic Facility Location with the CVaR objective. 
The obtained computational results support our theoretical findings, demonstrating that the Benders adaptive-cuts method outperforms the other standard implementations of Benders decomposition, as well as the GAPM or the deterministic equivalent formulation. The computational advantages of the new method are particularly pronounced for TSSPs with a large number of scenarios.

To the best of our knowledge, the only time the idea of using APM for Benders decomposition has been mentioned is in~\cite{pay2017partition}. In this paper, the authors propose to generate \emph{coarse} optimality cuts
to improve the computational performance of their Benders decomposition implementation. 
These cuts are generated from an adaptive partition of the scenarios, but are not used as a stand-alone approach, possibly due to the lack of theoretical arguments for its convergence.    
Our paper closes this gap and provides formal theoretical arguments for hybridizing (G)APM with Benders decomposition, including a derivation of feasibility and optimality adaptive cuts 
from a partition of the scenarios and a proof of sufficient conditions needed to obtain the optimal solution. Integration of the APM with other decomposition methods is proposed by \cite{van2017adaptive} for level decomposition, and \cite{siddig2019adaptive} for the SDDP algorithm for multistage stochastic problems.

The remainder of this paper is organized as follows. 
The problem statement is presented in Section~\ref{sec:statement} along with the overview of two standard implementations of the Benders method, based on single-cuts and multi-cuts, respectively. Our theoretical framework for integrating the GAPM into the Benders method and a generic implementation procedure are given in Section~\ref{sec:bendersMethod}. Section~\ref{sec:BendersStochNetworks} presents three classical stochastic network flow optimization problems for which we applied  the new methodology, and Section~\ref{sec:computational} shows the results of our computational experiments. Finally, we draw some final conclusions in Section~\ref{sec:conc}.

\section{Problem Statement and Mathematical Framework}\label{sec:statement}

In this section, we provide a formal 
definition of two-stage linear stochastic programs that are  subject of this study. We then provide an overview 
of two classical implementations of the Benders method based on a separation of multi-cuts and single-cuts, respectively, before we present the theoretical framework of the novel Benders adaptive-cuts approach.

\subsection {
Problem Formulation} 
We study the following two-stage linear stochastic program   with fixed recourse:
\begin{equation}
    \min\limits_{x\in\mathcal{X}} c^\intercal x + \sum_{s\in \mathcal{S}} p^s Q(x,\xi^s)
    \label{eq:main}
\end{equation}
where $x$ is a first-stage variable to which we associate a cost vector $c \in \mathbb{R}^n$, and $\mathcal{X}\subseteq\mathbb{R}^n$ is a non-empty closed (polyhedral) set describing feasible  first-stage solutions. The uncertainty is modeled using a discrete sample space which is composed of a set of scenarios $\mathcal{S}$,  each with associated probability $p^s>0$ for $s\in \mathcal{S}$. In practice, the set of scenarios is composed by equally probable scenarios sampled from a more complex distribution, as  presented in the sample average approximation method \citep{MAK199947, kleywegt2002sample}. 
For a given realization $\xi^s:=(T^s,h^s)$ of the random variable $\xi$ from $\mathcal{S}$, $Q(x,\xi^s)$ is the associated second-stage subproblem, defined as the following linear program (LP) :
\begin{subequations}
    \begin{align}
Q(x,\xi^s) := \min_{y \ge 0}&\  q^\intercal y \\
	 W y &= h^s - T^{s} x
\end{align} 
\label{eq:subproblem}
\end{subequations}
In this LP, $y\in \mathbb{R}^{m}$ is a 
second-stage variable, $W\in \mathbb{R}^{p \times m}$ is a fixed recourse matrix, $q\in\mathbb{R}^m$ is a deterministic cost vector. A random technology matrix $T^s \in \mathbb{R}^{p\times n}$ and a random right-hand side vector $h^s\in \mathbb{R}^p$ come from the discrete sample space {$\mathcal{S}$.}
We assume that there exists some $\bar{x}$ such that $Q(\bar{x},\xi^s)$ is feasible and bounded for all $s\in \mathcal{S}$.
Most commonly, first-stage decision space corresponds to a polytope $\mathcal{X}=\{x\in\mathbb{R}^n : Ax=b\}$, however it can comprise more complex structures, including constraints that restrict the values of (some of) $x$ variables to integer numbers.
A \emph{deterministic equivalent} of the problem \eqref{eq:main} is obtained after introducing a copy of the second-stage variable $y^s \in \mathbb{R}^m$, for each scenario $s \in \mathcal{S}$:
    \begin{align}
\min_{x \in \mathcal{X}, y^s \geq 0}   \{ c^T x + \sum_{s \in \mathcal{S}} p^s q^\intercal y^s \; 	\mid \;  W y^s &= h^s - T^{s} x, s \in \mathcal{S} \}
\label{eq:DE}
\end{align}

\subsection{Classical Way(s) of Deriving Benders Cuts}\label{sec:bendersClassic}

In this section we briefly review the classical Benders decomposition approach created to address large TSSPs. This method is also known as the \textit{L-Shaped method}. Its main idea is to solve a large TSSP by replacing the values of second-stage subproblems with single variables. The algorithm progressively forces these variables to reach the optimal value of the subproblems, by inserting in a dynamic  fashion additional valid 
 constraints, known as \emph{Benders cuts.}

To present the method, we rewrite problem~\eqref{eq:main} using the value-function reformulation:

\begin{subequations} 
    \begin{align}
    \min\limits_{x\in\mathcal{X}, \theta \in \mathbb{R}^{|\mathcal{S}|}}~~ & c^\intercal x + \sum_{s\in \mathcal{S}} p^s \theta^s \\
    &Q(x,\xi^s) \leq \theta^s \qquad  
    s\in \mathcal{S}.
    \end{align}
    \label{GenProblem}
\end{subequations}
The value of $Q(x,\xi^s)$ can be replaced by the optimal solution of its dual problem (assuming that the problem is well-defined and the optimal solution exists):

\begin{subequations}
\begin{align}
Q(x,\xi^s) = \max_{\lambda  \in \mathbb{R}^p}\ &(h^s-T^s x)^\intercal \lambda   \\
	&W^\intercal \lambda \leq q.   \label{dual_space}
\end{align}     
\label{Scen:Dual}
\end{subequations}
In general, let $\Lambda = \{\lambda\in \mathbb{R}^p: W^\intercal \lambda \leq q\}$ be the feasible region of the dual problem \eqref{Scen:Dual} (notice that $\Lambda$ does not depend on $x$). Assuming that the problem has a relatively complete recourse, $\Lambda$ is not empty, so let $\mathrm{XP}(\Lambda)$ and $\mathrm{XR}(\Lambda)$ be the sets of its extreme points and extreme rays, respectively. Hence, we can reformulate problem~\eqref{eq:main} as

\begin{subequations}
    \begin{alignat}{2}
 \min_{x\in\mathcal{X} ,\theta \in \mathbb{R}^{|\mathcal{S}|}}~& c^\intercal x + \sum_{s \in \mathcal{S}} p^s \theta^s\\
	(h^s-&T^s x)^\intercal \hat\lambda  \leq \theta^s && \quad    \hat{\lambda}  \in \mathrm{XP}(\Lambda),~s\in \mathcal{S} 	\label{optcut_m}\\
    (h^s-&T^s x)^\intercal \tilde\lambda  \leq 0 && \quad   \tilde{\lambda}  \in \mathrm{XR}(\Lambda),~s\in \mathcal{S}	\label{feascut_m} 
\end{alignat}
\label{Benders_multicut}
\end{subequations}
In this model, constraints~\eqref{optcut_m} are known as \emph{Benders optimality cuts} -- they ensure that $\theta^s$ is a lower bound for  $Q(x,\xi^s)$ for any extreme point of $\Lambda$, so, in particular, it is a bound for its maximum value. In the  case that $Q(\hat{x},\xi^s)$ is infeasible for a given $\hat{x}$, its dual is unbounded, so constraints~\eqref{feascut_m} (also known as \emph{Benders feasibility cuts}) ensure that $\hat{x}$ is no longer considered a feasible solution. This problem reformulation contains an exponential number of constraints, but it can be solved using a cutting plane approach by  iteratively selecting a candidate solution $\hat{x}$ of the \emph{restricted master problem} (which is the problem \eqref{Benders_multicut} with a subset of constraints \eqref{optcut_m}-\eqref{feascut_m}) and solving the subproblems $Q(\hat{x},\xi^s)$ to add possibly violated feasibility or optimality cuts on the fly. 
This method is known as the \emph{multi-cuts Benders method} (or multi-cuts L-shaped method) in stochastic programming, and it converges in a finite number of iterations~\citep{birge1988multicut}.

Alternatively, it is possible to represent the total second-stage cost using a single variable $\Theta$, by  aggregating  optimality cuts \eqref{optcut_m} into a single Benders optimality cut of type \eqref{optcut_s}.  That way, we obtain the following equivalent problem reformulation: 
\begin{subequations}
    \begin{alignat}{2}
 \min_{x\in \mathcal{X} ,\Theta \in \mathbb{R}}~ &c^\intercal x + \Theta\\
	\sum_{s\in S} p^s (h^s-&T^s x)^\intercal \hat{\lambda}^s \leq \Theta&& \quad   (\hat{\lambda}^1,\dots,\hat{\lambda}^{|\mathcal{S}|}) \in \mathrm{XP}(\Lambda)^{|\mathcal{S}|} 	\label{optcut_s}\\
	(h^s-&T^s x)^\intercal \tilde{\lambda}  \leq 0&& \quad    \tilde{\lambda}  \in \mathrm{XR}(\Lambda),  s\in \mathcal{S}	\label{feascut_s} 
\end{alignat}
\label{Benders_singlecut}
\end{subequations}
in which $(\hat{\lambda}^1,\dots,\hat{\lambda}^{|\mathcal{S}|})$ corresponds to a Cartesian product of $|\mathcal{S}|$ extreme points of $\mathrm{XP}(\Lambda)$.  
Problem~\eqref{Benders_singlecut} was considered in the original formulation of the L-shaped method~\citep{van1969shaped}, also known as the \textit{single-cuts Benders method}.
A notable difference between the single-cuts and the multi-cuts variant is in the separation of optimality cuts: while multi-cuts can be separated for each feasible scenario independently, to separate constraint \eqref{optcut_s}, the first-stage solution $x$ must yield feasible subproblems $Q(x,\xi^s)$ for all scenarios $s \in \mathcal{S}$.

The advantages of the single-cuts reformulation include a smaller number of variables and potentially a smaller number of cuts separated during the cutting plane algorithm. Thus, solving the master problem at each cutting plane iteration is faster than solving the problem via the multi-cuts implementation. An important disadvantage is that optimality cuts \eqref{optcut_s} can be added only when the whole set of subproblems is feasible.
Moreover, at any given iteration, the cuts obtained from this implementation are weaker than those generated via the multi-cuts implementation, which explains why  
the multi-cuts implementation 
may converge in a fewer number of iterations. 
In addition, for the latter one, it is possible to add optimality cuts for only a subset of scenarios, saving time due to the smaller number of subproblems solved. In general, however, the multi-cuts approach entails additional cost of adding many more cuts to the master problem, thus making the master problem larger and slower to solve. 

\section{GAPM and Benders Adaptive-Cuts}
\label{sec:bendersMethod}
We start this section by briefly summarizing the major ideas behind the generalized adaptive partition method, from which we then derive Benders adaptive-cuts. 

\subsection{The Generalized Adaptive Partition Method}
In Section \ref{sec:introducion}, we briefly mentioned some of the techniques that aim to reduce the size of stochastic two-stage programs. 
The Generalized Adaptive Partition Method has been developed 
by \citep{espinoza2014primal, song2015adaptive, ramirez2021generalized}, where the authors provide formal conditions under which any TSSP with fixed recourse can be solved exactly using a smaller deterministic equivalent reformulation of problem \eqref{eq:main}. We briefly summarize the major ideas behind the GAPM and introduce the necessary notation.

Given a two-stage stochastic program with fixed 
recourse as defined in  problem \eqref{eq:main}, 
let us assume that the subproblems are feasible for any $x \in \mathcal{X}$ and any realization of uncertain parameters (i.e., we have a \emph{relatively complete recourse}). Let $\mathcal{P}$ be a partition of the uncertainty space $\Omega$. Note that $\Omega$ can be a discrete set (like $\mathcal{S}$ in our previous problem) or continuous. 
For each element $P \in \mathcal{P}$ of this partition,  let $h^P=\mathbb{E}[h^\xi|P]$ and $T^P=\mathbb{E}[T^\xi|P]$ be the conditional expectations, given $P$, of  the right-hand side vector and the technology matrix, respectively. Consequently, we can define the aggregated subproblem associated to element $P \in \mathcal{P}$ as follows:
\[ Q(x,\mathbb{E}[\xi|P]) = \min\left\{ q^\intercal y \; \mid \; W y = h^P - T^Px, y\geq 0\right\}. \]
\citet{ramirez2021generalized} provide conditions on the dual of this aggregated subproblem to ensure its equivalence with the conditional expectation of $Q(x,\xi)$ given~$P$, denoted as $\mathbb{E}\big[ Q({x},\xi) | P \big]$.
\begin{prop}[\citet{ramirez2021generalized}]
		Let $\bar{x}\in\mathcal{X}$ and $P\subseteq \Omega$ be such that $Q(\bar{x},\xi)$ is feasible for all $\xi\in P$, and let $\hat{\lambda}^\xi$ be optimal solutions of the LP-dual of the subproblem $Q(\bar{x},\xi)$. If $\hat{\lambda}^\xi$ for $\xi\in P$ satisfies
		\begin{subequations}
			\label{HypoLemmaGen}
			\begin{eqnarray}
				\label{HypoLemmaGen1}
				\Big(\mathbb{E}\big[ h^\xi | P \big]\Big) ^\top \Big( \mathbb{E}\big[ \hat{\lambda}^\xi | P \big]\Big) &=& \mathbb{E}\left[ \left.h^\xi\right.^\top \hat{\lambda}^\xi \Big|  P \right]\\
				\label{HypoLemmaGen2}
				\bar{x}^\top\Big(\mathbb{E}\big[ T^\xi | P \big]^\top \mathbb{E}\big[ \hat{\lambda}^\xi | P \big]\Big) &=& \bar{x}^\top \mathbb{E}\left[ \left.T^\xi\right.^\top \hat{\lambda}^\xi \Big| P \right]	
			\end{eqnarray}
		\end{subequations}
	then
		\[ Q\big(\bar{x},\mathbb{E}\left[\xi|P\right] \big) = \mathbb{E}\big[ Q(\bar{x},\xi) | P \big]. \]
		\label{GenLemma}
\end{prop}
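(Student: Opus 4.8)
The plan is to prove the two inequalities
$Q\big(\bar{x},\mathbb{E}[\xi|P]\big) \le \mathbb{E}\big[Q(\bar{x},\xi)|P\big]$
and
$Q\big(\bar{x},\mathbb{E}[\xi|P]\big) \ge \mathbb{E}\big[Q(\bar{x},\xi)|P\big]$
separately, the first one holding unconditionally and the second one being where hypotheses \eqref{HypoLemmaGen1}--\eqref{HypoLemmaGen2} enter. Throughout I would work with the LP-dual formulation \eqref{Scen:Dual}, writing $Q(\bar{x},\xi) = \max\{(h^\xi - T^\xi \bar{x})^\intercal \lambda : \lambda \in \Lambda\}$ and, likewise, $Q\big(\bar{x},\mathbb{E}[\xi|P]\big) = \max\{(h^P - T^P \bar{x})^\intercal \lambda : \lambda \in \Lambda\}$, using the crucial fact that the dual feasible set $\Lambda = \{\lambda : W^\intercal\lambda \le q\}$ is the \emph{same} for every realization and for the aggregated subproblem, since the recourse matrix $W$ and the cost $q$ are fixed. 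Strong LP duality applies to each subproblem because the primal is feasible (relatively complete recourse, or the standing assumption on $\bar x$) and bounded, so the optimal dual solutions $\hat\lambda^\xi$ exist and attain value $Q(\bar{x},\xi) = (h^\xi - T^\xi\bar{x})^\intercal\hat\lambda^\xi$.

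For the ``$\le$'' direction I would argue directly: fix any $\lambda \in \Lambda$. Since $\lambda$ is dual-feasible for every scenario, weak duality gives $(h^\xi - T^\xi\bar{x})^\intercal\lambda \le Q(\bar{x},\xi)$ for all $\xi \in P$. Taking conditional expectation given $P$ and using linearity of expectation together with the definitions $h^P = \mathbb{E}[h^\xi|P]$, $T^P = \mathbb{E}[T^\xi|P]$, the left-hand side becomes exactly $(h^P - T^P\bar{x})^\intercal\lambda$, whence $(h^P - T^P\bar{x})^\intercal\lambda \le \mathbb{E}[Q(\bar{x},\xi)|P]$. Maximizing the left-hand side over $\lambda \in \Lambda$ yields $Q\big(\bar{x},\mathbb{E}[\xi|P]\big) \le \mathbb{E}[Q(\bar{x},\xi)|P]$. (Equivalently, this is Jensen's inequality applied to the convex map $\xi \mapsto Q(\bar{x},\xi)$, but the direct argument avoids invoking convexity explicitly.)

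For the ``$\ge$'' direction I would exhibit a single dual-feasible point for the aggregated subproblem whose objective value already reaches $\mathbb{E}[Q(\bar{x},\xi)|P]$. The natural candidate is $\bar\lambda := \mathbb{E}[\hat\lambda^\xi|P]$; since $\Lambda$ is convex and each $\hat\lambda^\xi \in \Lambda$, we have $\bar\lambda \in \Lambda$, so $Q\big(\bar{x},\mathbb{E}[\xi|P]\big) \ge (h^P - T^P\bar{x})^\intercal\bar\lambda = (h^P)^\intercal\bar\lambda - \bar{x}^\intercal (T^P)^\intercal\bar\lambda$. Now hypothesis \eqref{HypoLemmaGen1} rewrites the first term as $\mathbb{E}[\,(h^\xi)^\intercal\hat\lambda^\xi \mid P\,]$ and hypothesis \eqref{HypoLemmaGen2} rewrites the second as $\bar{x}^\intercal\,\mathbb{E}[\,(T^\xi)^\intercal\hat\lambda^\xi \mid P\,]$, so the bound becomes $\mathbb{E}\big[(h^\xi - T^\xi\bar{x})^\intercal\hat\lambda^\xi \mid P\big]$, which by strong duality in each scenario equals $\mathbb{E}[Q(\bar{x},\xi)|P]$. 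Combining the two inequalities gives the claimed equality.

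The routine steps are the duality bookkeeping; the one place to be careful — the ``main obstacle'' — is making sure every ingredient behaves well when $P$ is a continuous (infinite) set: namely, that $\xi \mapsto \hat\lambda^\xi$ can be chosen measurable and integrable so that $\bar\lambda = \mathbb{E}[\hat\lambda^\xi|P]$ is well defined, that $\bar\lambda \in \Lambda$ (an ``infinite convex combination'' staying in the polyhedron $\Lambda$, which holds since $\Lambda$ is closed and convex), and that the conditional expectations in \eqref{HypoLemmaGen1}--\eqref{HypoLemmaGen2} exist. In the discrete-scenario setting of problem \eqref{eq:main} these are immediate, as $\bar\lambda$ is an honest finite convex combination of the $\hat\lambda^s$, $s \in P$, weighted by the conditional probabilities $p^s / \sum_{s' \in P} p^{s'}$.
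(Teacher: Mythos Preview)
Your argument is correct. Both directions are handled cleanly: the ``$\le$'' side is the standard Jensen/weak-duality bound exploiting that $\Lambda$ does not depend on $\xi$, and the ``$\ge$'' side plugs the averaged dual $\bar\lambda = \mathbb{E}[\hat\lambda^\xi\mid P]$ into the aggregated problem and uses \eqref{HypoLemmaGen1}--\eqref{HypoLemmaGen2} to identify the resulting value with $\mathbb{E}[Q(\bar{x},\xi)\mid P]$. Your remarks on measurability and closedness of $\Lambda$ in the continuous case are the right caveats.

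As for comparison: the present paper does \emph{not} prove this proposition at all; it merely quotes it from \citet{ramirez2021generalized}. So there is no ``paper's own proof'' to match against. That said, the weak-duality step you use for the ``$\le$'' direction is exactly the mechanism the paper exploits in its proof of Proposition~\ref{prop_adapt_valid_cut} (there, a single $\hat\lambda^P\in\Lambda$ is dual-feasible for every $s\in P$, so its objective is dominated scenario-wise by $Q(x,\xi^s)$, and one sums). In that sense your proof is fully aligned with the techniques the paper does display, and is presumably close to the original argument in \citet{ramirez2021generalized}.
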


From this result, given a partition $\mathcal{P}$ of $\Omega$ such that its elements satisfy the conditions~\eqref{HypoLemmaGen}, by the law of total expectation we can rewrite
\[ \mathbb{E}[Q({\bar x},\xi)] = \sum_{P\in\mathcal{P}} \mathbb{E}[Q({\bar x},\xi)|P]\cdot \mathbb{P}(P) =  \sum_{P\in\mathcal{P}} Q({\bar x},\mathbb{E}[\xi|P])\cdot \mathbb{P}(P).\]
Therefore, if $\bar x \in \arg \min_{x\in \mathcal{X}}\{c^\intercal x +\sum_{P\in\mathcal{P}} Q(x,\mathbb{E}[\xi|P])\cdot \mathbb{P}(P)\}$ for a partition $\mathcal{P}$ whose all elements $ P \in \mathcal{P}$ satisfy the condition \eqref{HypoLemmaGen}, then $\bar x$ is also optimal 
for~\eqref{eq:main}. 
Moreover, there always exists a finite partition $\mathcal{P}$ satisfying the required conditions~\eqref{HypoLemmaGen}. Indeed, for a discrete uncertainty space, this is true at least for $\mathcal{P}= \mathcal{S}$. For continuous uncertainty space, see \cite{ramirez2021generalized} for more details. 
The  deterministic equivalent reformulation with respect to $\mathcal{P}$ is given as:
\begin{align}
\min_{x \in \mathcal{X}}   \{ c^T x + \sum_{P \in \mathcal{P}} p^P q^\intercal y^P \; 	\mid \;  W y^P &= h^P - T^{P} x, \; y^{P} \geq 0, \; P \in \mathcal{P} \}
\label{eq:DE_P}
\end{align}

In what follows, we propose a Benders adaptive-cuts method which aims to solve the problem \eqref{eq:DE_P} in a cutting plane fashion in which  the scenario disaggregation is guided according to the GAPM scheme. While Proposition \ref{GenLemma} assumes that for a given $\bar x$ all subproblems are feasible, 
we also show how to deal with a more general setting, in which certain realizations of $x$ may render the subproblems infeasible.


\subsection{Benders Adaptive-Cuts}

The GAPM leads to the idea of reformulating the TSSPs using Benders decomposition with a potentially smaller set of scenarios. This results into what we call a \emph{Benders adaptive-cuts method}, in which we generate feasibility and optimality cuts at each iteration while avoiding an increase in the size of the master problem, as often occurs in the multi-cuts implementation. Our method aims to benefit from the advantages of both, the Benders multi-cuts and single-cuts methods. The reformulation of the original stochastic program \eqref{eq:main} is similar to \eqref{Benders_multicut}, but injecting fewer cuts than in the multi-cuts case. We add more optimality cuts compared to the single-cuts implementation,
with the advantage of addressing both optimality and feasibility  during the same iteration, which is not the case when we refer to the single-cuts implementation.

We first show how to aggregate a set of optimality cuts for a subset $P \subset \mathcal{S}$ of scenarios and demonstrate the validity of this new cut for reformulation \eqref{Benders_multicut}.

\begin{prop} \label{prop_adapt_valid_cut}
Let $P\subseteq \mathcal{S}$, and let  $p^P = \sum_{s\in P} p^s$, $h^P = \frac{\sum_{s\in P} p^s h^s}{\sum_{s\in P} p^s}$ and $T^P = \frac{\sum_{s\in P} p^s T^s}{\sum_{s\in P} p^s}$.  Then, the following inequalities 
\begin{equation}
p^P \cdot (h^P-T^Px)^\intercal \hat{\lambda}^P  \leq \sum_{s\in P} p^s\theta^s  \qquad   \hat{\lambda}^P\in\mathrm{XP}(\Lambda)\label{eq:aggregatedBender}
\end{equation}
are valid for problem \eqref{Benders_multicut}.
\end{prop}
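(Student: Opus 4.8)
The plan is to show that inequality \eqref{eq:aggregatedBender} is implied by the multi-cut optimality inequalities \eqref{optcut_m} of problem \eqref{Benders_multicut}, so that any feasible point $(x,\theta)$ of \eqref{Benders_multicut} automatically satisfies \eqref{eq:aggregatedBender}. The key observation is that $\hat\lambda^P \in \mathrm{XP}(\Lambda)$ is in particular a \emph{feasible} dual point, i.e. $W^\intercal \hat\lambda^P \le q$, so it may be used as the (not necessarily optimal) dual multiplier for \emph{every} scenario subproblem $s \in P$. Concretely, for each $s\in P$ the point $\hat\lambda^P$ lies in $\Lambda$, hence $(h^s - T^s x)^\intercal \hat\lambda^P \le Q(x,\xi^s)$ for all $x$ with $Q(x,\xi^s)$ feasible; and since any point feasible for \eqref{Benders_multicut} satisfies the multi-cut constraint \eqref{optcut_m} for $\hat\lambda^P$ itself (which is one of the extreme points indexing that family), we get $(h^s - T^s x)^\intercal \hat\lambda^P \le \theta^s$ for each $s \in P$.

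First I would write this scenario-wise inequality $(h^s - T^s x)^\intercal \hat\lambda^P \le \theta^s$, multiply by $p^s > 0$, and sum over $s \in P$, obtaining
\[
\sum_{s\in P} p^s (h^s - T^s x)^\intercal \hat\lambda^P \;\le\; \sum_{s\in P} p^s \theta^s .
\]
Then I would simplify the left-hand side using linearity: $\sum_{s\in P} p^s (h^s - T^s x)^\intercal \hat\lambda^P = \big(\sum_{s\in P} p^s h^s - (\sum_{s\in P} p^s T^s) x\big)^\intercal \hat\lambda^P$, and by the definitions $p^P = \sum_{s\in P} p^s$, $h^P = (\sum_{s\in P} p^s h^s)/p^P$, $T^P = (\sum_{s\in P} p^s T^s)/p^P$, this equals $p^P (h^P - T^P x)^\intercal \hat\lambda^P$. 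Substituting back yields exactly \eqref{eq:aggregatedBender}, which completes the argument.

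There is essentially no hard part here, since the claim is only about validity (implication from the already-valid multi-cuts), not about sufficiency or exactness of the aggregation — those stronger statements, requiring the GAPM conditions \eqref{HypoLemmaGen}, come later. The one point that deserves a word of care is the feasibility case: if $Q(x,\xi^s)$ is infeasible for some $s\in P$ then $\theta^s$ is not meaningfully bounded by an optimality cut, but in problem \eqref{Benders_multicut} such an $x$ is already excluded by the feasibility cuts \eqref{feascut_m}; so it suffices to verify the inequality on points $(x,\theta)$ that are feasible for \eqref{Benders_multicut}, where all relevant subproblems are feasible and the chain above goes through verbatim. I would state this explicitly to close the argument cleanly.
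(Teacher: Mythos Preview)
Your argument is correct and follows essentially the same aggregation idea as the paper. There is one minor streamlining worth noting: the paper's proof introduces, for each $s\in P$, the \emph{optimal} dual $\hat{\lambda}^s$ of $Q(x,\xi^s)$, first shows $\sum_{s\in P} p^s(h^s-T^sx)^\intercal \hat{\lambda}^P \le \sum_{s\in P} p^s(h^s-T^sx)^\intercal \hat{\lambda}^s$ by dual feasibility of $\hat{\lambda}^P$ versus optimality of $\hat{\lambda}^s$, and only then invokes the multi-cut constraints $(h^s-T^sx)^\intercal \hat{\lambda}^s \le \theta^s$; you bypass this detour by observing directly that $\hat{\lambda}^P$ is itself one of the extreme points indexing the family \eqref{optcut_m}, so $(h^s-T^sx)^\intercal \hat{\lambda}^P \le \theta^s$ holds immediately for every $s\in P$. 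Both routes arrive at the same summed inequality and the same algebraic rewriting via the definitions of $p^P,h^P,T^P$; your version is just a shade more direct. Your closing remark about infeasible subproblems being excluded by \eqref{feascut_m} is a sensible clarification that the paper leaves implicit.
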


\begin{proof}
For a given $x \in \mathcal{X}$ and $s\in \mathcal{S}$, let $\hat{\lambda}^s$  be an optimal solution of the subproblem $Q(x,\xi^s)$ (assuming the associated subproblem is feasible). Then, $(\hat{\lambda}^s)_{s\in P}$ 
is also an optimal solution for the problem
\begin{align*}
 \sum_{s\in P} p^s Q(x,\xi^s)  = \max_{{\lambda^s \in \mathbb{R}_+^{|P|}}}& \sum_{s\in P} p^s \cdot (h^s-T^s x)^\intercal \lambda^s  \\  
  & W^\intercal \lambda^s \leq q \qquad  s\in P.
\end{align*}
On the other hand, for any $\hat{\lambda}^P\in\mathrm{XP}(\Lambda)$, since $\hat{\lambda}^P$ is a feasible solution for the dual of $Q(x,\xi^s)$ for all $s\in \mathcal{S}$, by optimality of $(\hat{\lambda^s})_{s\in P}$, we have:
\begin{align*}
\sum_{s\in P} p^s \cdot (h^s-T^s x)^\intercal \hat{\lambda}^s  & \geq  \sum_{s\in P} p^s \cdot (h^s-T^s x)^\intercal \hat{\lambda}^P \\
& = \left(\left(\sum_{s\in P} p^s{h^s}\right)- \left(\sum_{s\in P} {p^s} T^s\right) x\right)^\intercal \hat{\lambda}^P \\
& = p^P \cdot (h^P-T^P x)^\intercal \hat{\lambda}^P
\end{align*}
Since the optimality cuts for the multi-cuts case enforce that 
\[ (h^s-T^s x)^\intercal \hat{\lambda}^s \leq \theta^s  \qquad  s\in \mathcal{S}, \]
then by aggregating these cuts for all $s\in P$, we obtain 
\begin{align*}
     p^P \cdot (h^P-T^P x)^\intercal \hat{\lambda}^P
\leq \sum_{s\in P} p^s (h^s-T^s x)^\intercal \hat{\lambda}^s  
\leq \sum_{s\in P} p^s \theta^s
\end{align*}
so~\eqref{eq:aggregatedBender} is a valid cut for problem \eqref{Benders_multicut}.
\end{proof}

Note that $h^P$ and $T^P$ can be interpreted as weighted averages of the random components $h^s$ and $T^s$ among all scenarios $s\in P$, and $p^P$ is the aggregated probability of these scenarios. Hence, \eqref{eq:aggregatedBender} can be seen as an aggregation of the optimality cuts~\eqref{optcut_m} from the original multi-cuts Benders method. 
This type of aggregated Benders cuts has been also studied as a way to \emph{a priori} create artificial scenarios to tighten the master problem~\citep{crainic2021partial}, or to generate Benders cuts~\citep{rahmaniani2021asynchronous}, for a predefined clustering of scenarios.


{Next, we present sufficient conditions under which   Benders optimality cuts imposed with respect to a partition $\mathcal{P}$ of $\mathcal{S}$ provide a proper problem reformulation, and thus, an optimal solution is guaranteed to be found.}
These conditions 
{correspond} to the conditions required for the GAPM (cf.\  Proposition~\ref{GenLemma}).

\begin{prop} \label{prop_optimal}
Let $x^*$ be an optimal solution of the problem
\begin{subequations}
    \begin{alignat}{2}
 \min_{x\in \mathcal{X} ,\theta \in \mathbb{R}^{|\mathcal{S}|}} ~&c^\intercal x + \sum_{s \in \mathcal{S}} p^s \theta^s\\
p^P (h^P-&T^P x)^\intercal \hat{\lambda} \leq \sum_{s\in P} p^s\theta^s  &&\quad 
\hat{\lambda} \in \mathrm{XP}(\Lambda),  P\in \mathcal{P} \label{eq:aggregatedBender_duplicated}\\
    (h^s-&T^s x)^\intercal \tilde\lambda  \leq 0 && \quad 
    \tilde{\lambda}  \in \mathrm{XR}(\Lambda), s\in \mathcal{S},	\label{eq:aggregatedBender_duplicated_feascut} 
\end{alignat}
\label{Benders_adaptive}
\end{subequations}
where $\mathcal{P}$ is a partition of $\mathcal{S}$ such that 
there exist optimal dual solutions $\hat{\lambda}^s$ of subproblems $Q(x^*,\xi^s)$ 
{that} satisfy the following conditions:
\begin{subequations}
\begin{align}
    \left(\sum_{s\in P} p^s\right) \cdot \sum_{s\in P} p^s \left({h^s}^\intercal \hat{\lambda}^s\right) &= \left(\sum_{s\in P} p^s h^s\right)^\intercal \left(\sum_{s\in P} p^s \hat{\lambda}^s \right) && \text{for all } P \in \mathcal{P} \label{eq:condition1}\\
    \left(\sum_{s\in P} p^s\right) \cdot \sum_{s\in P} p^s \left({T^s}x^\intercal \hat{\lambda}^s\right) &= \left(\sum_{s\in P} p^s T^s x\right)^\intercal \left(\sum_{s\in P} p^s \hat{\lambda}^s \right) && \text{for all } P \in \mathcal{P}. \label{eq:condition2}
\end{align}\label{eq:conditions}
\end{subequations}
Then, $x^*$ is also an optimal solution of \eqref{eq:main}.
\end{prop}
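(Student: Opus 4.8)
The plan is to derive the chain of inequalities
\[
z_{\mathrm{DE}} \;\le\; c^\intercal x^* + \sum_{s\in\mathcal{S}} p^s\, Q(x^*,\xi^s) \;\le\; c^\intercal x^* + \sum_{s\in\mathcal{S}} p^s\, \bar\theta^s \;=\; z_{\mathrm{A}} \;\le\; z_{\mathrm{DE}},
\]
where $(x^*,\bar\theta)$ denotes an optimal solution of \eqref{Benders_adaptive}, and $z_{\mathrm{DE}}$, $z_{\mathrm{M}}$, $z_{\mathrm{A}}$ are the optimal values of \eqref{eq:main}, \eqref{Benders_multicut} and \eqref{Benders_adaptive}, respectively. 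Once this chain is established all its members coincide, and in particular $c^\intercal x^* + \sum_{s\in\mathcal{S}} p^s Q(x^*,\xi^s) = z_{\mathrm{DE}}$, which is exactly the assertion. The leftmost inequality is immediate because $x^*\in\mathcal{X}$, so its left-hand side is the objective value of a feasible point of \eqref{eq:main}. For the rightmost one I would note that \eqref{Benders_adaptive} is a relaxation of the multi-cuts model \eqref{Benders_multicut}: the feasibility cuts \eqref{eq:aggregatedBender_duplicated_feascut} coincide with \eqref{feascut_m}, and by Proposition~\ref{prop_adapt_valid_cut} every aggregated optimality cut \eqref{eq:aggregatedBender_duplicated} is valid for \eqref{Benders_multicut}, hence satisfied by all of its feasible points; since the two problems share the same objective, $z_{\mathrm{A}}\le z_{\mathrm{M}}=z_{\mathrm{DE}}$, the last equality being the classical equivalence between \eqref{Benders_multicut} and \eqref{eq:main}.

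The two middle relations I would handle for each element $P\in\mathcal{P}$ separately. Since $x^*$ satisfies all the extreme-ray cuts \eqref{eq:aggregatedBender_duplicated_feascut}, the linear program $\max\{(h^s-T^sx^*)^\intercal\lambda:\lambda\in\Lambda\}$ is bounded, and because $\Lambda\neq\emptyset$, each $Q(x^*,\xi^s)$ is feasible and bounded, so the optimal dual solutions $\hat{\lambda}^s$ appearing in the hypothesis exist. The crucial observation is that conditions \eqref{eq:condition1}--\eqref{eq:condition2} are nothing but the discrete form of the GAPM conditions \eqref{HypoLemmaGen1}--\eqref{HypoLemmaGen2}: for $P\subseteq\mathcal{S}$ one has $\mathbb{E}[h^\xi\mid P]=\tfrac{1}{p^P}\sum_{s\in P}p^s h^s$, $\mathbb{E}[\hat{\lambda}^\xi\mid P]=\tfrac{1}{p^P}\sum_{s\in P}p^s\hat{\lambda}^s$ and $\mathbb{E}[(h^\xi)^\intercal\hat{\lambda}^\xi\mid P]=\tfrac{1}{p^P}\sum_{s\in P}p^s(h^s)^\intercal\hat{\lambda}^s$, and similarly for $T^\xi$, so dividing \eqref{eq:condition1}--\eqref{eq:condition2} by $(p^P)^2$ reproduces \eqref{HypoLemmaGen1}--\eqref{HypoLemmaGen2}. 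Applying Proposition~\ref{GenLemma} with $\bar{x}=x^*$ on each $P\in\mathcal{P}$ then gives
\[
Q\big(x^*,\mathbb{E}[\xi\mid P]\big)=\mathbb{E}\big[Q(x^*,\xi)\mid P\big]=\tfrac{1}{p^P}\sum_{s\in P}p^s\, Q(x^*,\xi^s),\qquad P\in\mathcal{P}.
\]

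At the optimum the aggregated optimality cuts \eqref{eq:aggregatedBender_duplicated} hold for every $\hat{\lambda}\in\mathrm{XP}(\Lambda)$; taking the maximum over these extreme points, and noting that the aggregated subproblem $Q(x^*,\mathbb{E}[\xi\mid P])$ is feasible and bounded (its right-hand side is a convex combination of the feasible vectors $h^s-T^sx^*$, $s\in P$, and $\Lambda\neq\emptyset$), so that its dual optimum is attained at an extreme point of $\Lambda$, I obtain $\sum_{s\in P}p^s\bar\theta^s\ge p^P\,Q(x^*,\mathbb{E}[\xi\mid P])$ for every $P\in\mathcal{P}$. Substituting the identity above and summing over the partition $\mathcal{P}$ of $\mathcal{S}$ yields $\sum_{s\in\mathcal{S}}p^s\bar\theta^s\ge\sum_{s\in\mathcal{S}}p^s Q(x^*,\xi^s)$, which is the second inequality of the chain; the equality with $z_{\mathrm{A}}$ is merely optimality of $(x^*,\bar\theta)$ for \eqref{Benders_adaptive}. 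This closes the chain and hence the proof.

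I expect the main obstacle to be precisely the identification carried out in the second step: recognising that the somewhat opaque algebraic identities \eqref{eq:conditions} are exactly the discrete specialisation of the GAPM conditions \eqref{HypoLemmaGen}, which is what makes Proposition~\ref{GenLemma} available at $\bar{x}=x^*$; everything else is standard LP-duality manipulation. A secondary technical point that deserves care is feasibility: one must verify that satisfaction of the extreme-ray cuts \eqref{eq:aggregatedBender_duplicated_feascut} at $x^*$ guarantees feasibility and boundedness of all subproblems $Q(x^*,\xi^s)$ and of all aggregated subproblems $Q(x^*,\mathbb{E}[\xi\mid P])$, so that the dual optima $\hat{\lambda}^s$ and the extreme-point maximizers invoked above genuinely exist; this is where the fixed-recourse assumption, i.e.\ the fixed and non-empty dual feasible set $\Lambda$, is used.
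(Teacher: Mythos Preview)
Your proof is correct and shares the paper's overall sandwich structure ($z_{\mathrm{A}}\le z_{\mathrm{DE}}$ from the relaxation side, and $\sum_s p^s Q(x^*,\xi^s)\le \sum_s p^s\bar\theta^s$ from the conditions), but the way you establish the key inequality $\sum_{s\in P}p^s\bar\theta^s\ge\sum_{s\in P}p^s Q(x^*,\xi^s)$ differs. You maximise the aggregated cut over $\hat\lambda\in\mathrm{XP}(\Lambda)$ to obtain $p^P Q(x^*,\mathbb{E}[\xi\mid P])$ on the left, and then invoke Proposition~\ref{GenLemma} after identifying \eqref{eq:conditions} with the discrete form of \eqref{HypoLemmaGen}. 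The paper instead works entirely inside the Benders framework: it instantiates the aggregated cut \eqref{eq:aggregatedBender_duplicated} at each individual $\hat\lambda^{s'}$, $s'\in P$, takes the $p^{s'}/p^P$-weighted sum to place the \emph{average} dual $\tfrac{1}{p^P}\sum_{s}p^s\hat\lambda^s$ on the left, and then uses \eqref{eq:conditions} purely algebraically to rewrite that left-hand side as $\sum_{s\in P}p^s(h^s-T^sx)^\intercal\hat\lambda^s$, i.e.\ as $\sum_{s\in P}p^s Q(x^*,\xi^s)$. In effect the paper re-derives the relevant piece of Proposition~\ref{GenLemma} inline, while you reuse it as a black box. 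Your route is more modular and makes the link to the GAPM explicit; the paper's route is self-contained and avoids checking feasibility of the aggregated subproblem $Q(x^*,\mathbb{E}[\xi\mid P])$, which you correctly flagged as an extra verification in your approach.
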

\begin{proof}
For each $P\in\mathcal{P}$, since $\hat{\lambda}^{s}\in \mathrm{XP}(\Lambda)$ for all $s\in P$, then constraint \eqref{eq:aggregatedBender_duplicated} implies that
\[ p^P \cdot (h^P-T^P x)^\intercal \hat{\lambda}^{s'} \leq \sum_{s\in P} p^s\theta^s \qquad   s'\in P \]
and hence, by multiplying the above inequalities by $p^{s'}/p^P$ and summing them up over all $s' \in P$, we obtain: 
\[ p^P \cdot (h^P-T^P x)^\intercal \left(\frac{\sum_{s\in P} p^s \hat{\lambda}^s}{\sum_{s\in P} p^s}\right) \leq \sum_{s\in P} p^s\theta^s.  \]
The left-hand side term of this expression can be rewritten as
\[ \left(\sum_{s\in P} p^s\right) \left(\frac{\sum_{s\in P} p^s h^s - \sum_{s\in P} p^s T^s x}{\sum_{s\in P} p^s}\right)^\intercal \left(\frac{\sum_{s\in P} p^s \hat{\lambda}^s}{\sum_{s\in P} p^s} \right)  \]
and, if $\hat{\lambda}^{s}$ satisfies conditions~\eqref{eq:condition1} and~\eqref{eq:condition2}, then this term is equal to
\[\left(\sum_{s\in P} p^s\right) \left(\frac{1}{\sum_{s\in P} p^s} \sum_{s\in P} p^s \left({h^s - T^s x}\right)^\intercal \hat{\lambda}^s \right) 
\]
Thus, constraint \eqref{eq:aggregatedBender_duplicated} implies that
\[  \sum_{s\in P} p^s \left({h^s} - T^sx \right)^\intercal \hat{\lambda}^s \leq \sum_{s\in P} p^s\theta^s  \qquad  
P \in \mathcal{P}.\]
By aggregating these cuts for all $P\in \mathcal{P}$, this also implies that
\[ \sum_{P\in \mathcal{P}}\sum_{s\in P} p^s \left({h^s} - T^sx \right)^\intercal \hat{\lambda}^s \leq \sum_{P\in\mathcal{P}}\sum_{s\in P} p^s\theta^s, \]
which is equivalent to 
\[  \sum_{s\in S} p^s \left({h^s} - T^sx \right)^\intercal \hat{\lambda}^s  \leq \sum_{s\in S} p^s\theta^s := \Theta, \]
which are Benders cut for the single-cuts reformulation. Hence, $x^*$  is also an optimal solution of the single-cuts Benders reformulation.
{Since the problem \eqref{Benders_adaptive} is a relaxation of the original multi-cuts Benders reformulation \eqref{Benders_multicut}, and we just showed that an optimal solution $x^*$ of \eqref{Benders_adaptive} is also optimal for the problem \eqref{eq:main}, it follows that \eqref{Benders_adaptive} is a proper   reformulation of problem \eqref{eq:main}.} 
\end{proof}
For two partitions $\mathcal{P}^1, \mathcal{P}^2$ of $\mathcal{S}$, we call $\mathcal{P}^2$ \emph{a refinement} with respect to $\mathcal{P}^1$, denoted by $\mathcal{P}^1\preceq \mathcal{P}^2$, if and only if for any two subsets $P_r,P_q \subset \mathcal{S}$, $P_r,P_q \in \mathcal{P}^2$, there exists a set $P \in \mathcal{P}^1$ such that $P_r \cup P_q \subseteq P$. The following result follows from Proposition \ref{prop_adapt_valid_cut}: 
\begin{corollary}\label{cor:partitions}
Let $v(\mathcal{P})$ denote the optimal solution value of the problem \eqref{Benders_adaptive}. For a given family of partitions $\{\mathcal{P}^1, \dots, \mathcal{P}^t\}$ such that 
$\mathcal{P}^1 \preceq \dots \preceq \mathcal{P}^t \preceq \mathcal{S}$, 
we have
\[ v(\mathcal{P}^1) \leq \dots \leq v(\mathcal{P}^t) \leq v(\mathcal{S}). \]
\end{corollary}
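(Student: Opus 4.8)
The plan is to prove a one‑step monotonicity lemma --- that $\mathcal{P}^1\preceq\mathcal{P}^2$ implies $v(\mathcal{P}^1)\le v(\mathcal{P}^2)$ --- and then chain it along the given sequence, noting in addition that \eqref{Benders_adaptive} for $\mathcal{S}$ is exactly the multi-cuts reformulation \eqref{Benders_multicut} (when every block is a singleton $\{s\}$ we have $p^{\{s\}}=p^s$, $h^{\{s\}}=h^s$, $T^{\{s\}}=T^s$, so \eqref{eq:aggregatedBender_duplicated} collapses to $(h^s-T^sx)^\intercal\hat\lambda\le\theta^s$, i.e.\ \eqref{optcut_m}), so that $v(\mathcal{S})$ sits at the end of the chain. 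The observation that makes the lemma easy is that the objective $c^\intercal x+\sum_{s\in\mathcal{S}}p^s\theta^s$ of \eqref{Benders_adaptive} does not depend on $\mathcal{P}$, and neither do the feasibility cuts \eqref{eq:aggregatedBender_duplicated_feascut}; only the optimality cuts \eqref{eq:aggregatedBender_duplicated} change with $\mathcal{P}$. Hence $v(\mathcal{P}^1)\le v(\mathcal{P}^2)$ will follow once we show that every $(x,\theta)$ in the feasible set of \eqref{Benders_adaptive} for $\mathcal{P}^2$ also lies in the feasible set of \eqref{Benders_adaptive} for $\mathcal{P}^1$.

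To prove that inclusion, fix such a pair $(x,\theta)$, a block $P\in\mathcal{P}^1$, and an extreme point $\hat\lambda\in\mathrm{XP}(\Lambda)$. Since $\mathcal{P}^1\preceq\mathcal{P}^2$, the block $P$ is a disjoint union $P=\bigcup_{j}P_j$ of blocks $P_j\in\mathcal{P}^2$. For each $j$, constraint \eqref{eq:aggregatedBender_duplicated} of \eqref{Benders_adaptive} for $\mathcal{P}^2$ with the same multiplier $\hat\lambda$ gives $p^{P_j}(h^{P_j}-T^{P_j}x)^\intercal\hat\lambda\le\sum_{s\in P_j}p^s\theta^s$. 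Summing these over $j$ and using the definitions $p^{P_j}h^{P_j}=\sum_{s\in P_j}p^sh^s$ and $p^{P_j}T^{P_j}=\sum_{s\in P_j}p^sT^s$, the left-hand side collapses to $p^P(h^P-T^Px)^\intercal\hat\lambda$ and the right-hand side to $\sum_{s\in P}p^s\theta^s$, which is precisely constraint \eqref{eq:aggregatedBender_duplicated} of \eqref{Benders_adaptive} for $\mathcal{P}^1$ applied to $(P,\hat\lambda)$. This is the same aggregation step as in the proof of Proposition~\ref{prop_adapt_valid_cut}, applied now to the block-cuts of the finer partition rather than to the individual-scenario cuts of the multi-cuts model. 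Since $P$ and $\hat\lambda$ were arbitrary, $(x,\theta)$ satisfies all optimality cuts of \eqref{Benders_adaptive} for $\mathcal{P}^1$, and it satisfies the feasibility cuts trivially because these coincide in the two problems; hence it is feasible there, and since the objectives agree, $v(\mathcal{P}^1)\le v(\mathcal{P}^2)$. Applying this to each consecutive pair in $\mathcal{P}^1\preceq\dots\preceq\mathcal{P}^t\preceq\mathcal{S}$ yields $v(\mathcal{P}^1)\le\dots\le v(\mathcal{P}^t)\le v(\mathcal{S})$.

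I do not expect a serious obstacle: the whole argument is a one-line restatement of the aggregation already in Proposition~\ref{prop_adapt_valid_cut}. The only two points that need a little care are (i) turning the paper's pairwise phrasing of "refinement" into the fact actually used here --- that each block of the coarser partition $\mathcal{P}^1$ decomposes into blocks of the finer partition $\mathcal{P}^2$ --- and (ii) making sure $v(\mathcal{P})$ is well defined, i.e.\ that \eqref{Benders_adaptive} is bounded; this follows from the feasibility cuts \eqref{eq:aggregatedBender_duplicated_feascut} (which force each single-scenario subproblem $Q(x,\xi^s)$ to be feasible, hence finite) together with the standing relatively-complete-recourse assumption, so that the aggregated optimality cuts bound $\sum_{s\in\mathcal{S}}p^s\theta^s$ from below on the feasible set.
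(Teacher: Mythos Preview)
Your proof is correct and follows the same route the paper indicates: the corollary is stated without an explicit proof, merely as ``follows from Proposition~\ref{prop_adapt_valid_cut},'' and your argument is precisely the natural fleshing-out of that reference --- aggregate the finer-partition optimality cuts (with a common multiplier $\hat\lambda$) to recover each coarser-partition cut, conclude that the feasible region only grows under coarsening, and chain the resulting inequalities. Your caveat (i) about the paper's slightly awkward pairwise phrasing of ``refinement'' is well taken but harmless, and (ii) is a reasonable side remark; neither affects the match with the paper's approach.
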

Hence, by starting with a union of all scenarios and by iteratively refining this partition, Benders adaptive-cuts method provides a valid and non-decreasing lower bound, which eventually ensures the convergence of the method. The convergence follows from two aspects: first, from the convergence of the GAPM, and second from the convergence of the Benders method applied within each GAPM iteration. 
In Section \ref{sec:implementation}, we discuss how to implement the Benders adaptive-cuts method based on an iterative refinement of partitions of $\mathcal{S}$ guided by the GAPM.

\subsection{Benders Adaptive-Single-Cuts} \label{sec:adaptive-single}
We notice that formulation \eqref{Benders_adaptive} includes one optimality cut for each element $P \in \mathcal{P}$, and thus, it can be interpreted as Benders multi-cuts method applied to the given partition $\mathcal{P}$. Similarly, one can consider its single-cuts counterpart, which is given as
\begin{subequations}
    \begin{alignat}{2}
 \min_{x\in \mathcal{X} ,\Theta_{\mathcal{P}} \in \mathbb{R}} ~&c^\intercal x + \Theta \\
\sum_{P\in \mathcal{P} } p^P (h^P-&T^P x)^\intercal \hat{\lambda}^{P} \leq  \Theta 
&&\quad 
(\hat{\lambda}^{1}, \dots, \hat{\lambda}^{|\mathcal{P}|}) \in \mathrm{XP}(\Lambda)^{|\mathcal{P}|}  \label{eq:aggregatedBender_duplicated_single}\\
    (h^s-&T^s x)^\intercal \tilde\lambda  \leq 0 && \quad 
    \tilde{\lambda}  \in \mathrm{XR}(\Lambda), s\in \mathcal{S}.	\label{eq:aggregatedBender_duplicated_feascut_single} 
\end{alignat}
\label{Benders_adaptive_single}
\end{subequations}
In this model, we refer to constraints \eqref{eq:aggregatedBender_duplicated_single} as \emph{Benders adaptive-single-cuts}.
Using similar arguments as above, it is not difficult to see that conditions given in Proposition \ref{prop_optimal}   guarantee that the optimal solution $x^*$ of  \eqref{Benders_adaptive_single} is also optimal for the original problem \eqref{eq:main}.

It is worth mentioning that constraints 
\eqref{eq:aggregatedBender_duplicated_single} have been originally used by \cite{pay2017partition} under the name \emph{coarse cuts}. The authors generated these cuts in the early stage of their Benders decomposition scheme, when the initial solutions of the master problem are far from the optimal solution. The cuts have been used within a generation of additional valid inequalities (and not as a stand-alone problem formulation), with the aim of enhancing the convergence and improving the computational performance. 
Thus, with our results derived in Proposition \ref{prop_optimal}, we show that also \emph{coarse cuts}, when applied to a partitioning scheme guided by the GAPM, can lead to an alternative exact solution approach, to which we refer as Benders adaptive-single-cuts method.

\subsection{Implementation of Benders Adaptive-Cuts}\label{sec:implementation}

The result provided in Proposition~\ref{prop_optimal} indicates that it is sufficient to find a
partition of the scenarios, say $\mathcal{P}^*$, satisfying the conditions \eqref{eq:condition1} and \eqref{eq:condition2} 
in order to correctly reformulate the problem~\eqref{eq:main} and find its  optimal solution.
Since 
$\mathcal{P}^*$ depends on the optimal solution $x^*$, it is not possible to construct it \emph{a priori}. However, we can start with  
a single aggregated scenario $\mathcal{P}=\{\mathcal{S}\}$ and  iteratively refine the partitioning of $\mathcal{S}$  until the convergence criteria are met. That way, we are generating  new 
partition-based cuts, in a similar way to other Benders approaches, using a cutting plane procedure.  {At the same time, we will be potentially improving the incumbent solution.} An algorithmic implementation of this methodology is presented in Algorithm~\ref{alg:gen_benders}.

\begin{algorithm}[tbhp]
\hspace*{\algorithmicindent} \textbf{Input:} Set of scenarios $\mathcal{S}$, with probabilities $p^s$ for $s\in\mathcal{S}$\\
 \hspace*{\algorithmicindent} \textbf{Output:} Optimal solution $\hat{x}$ and optimal value $z^\star$
	\begin{algorithmic}[1]
	\State Set $t:=0$, $z_L^{(0)}:=-\infty$, $z_U:=\infty$, $z^\star:=\infty$ and $\mathcal{P}^{(0)} = \{\mathcal{S}\}$.
    \State \label{step:2} Let \textsc{MP} be the problem
\[         \min_{x\in\mathcal{X} ,\theta 
\ge \LB
} c^\intercal x + \sum_{s \in \mathcal{S}} p^s \theta^s \]
		\State \label{solve_MP}Solve \textsc{MP} 
		and let $(x^{(t)},{\theta}^{(t)})$ and $z_L^{(t)}$ be its optimal solution and its optimal value. 
		\label{alg:seq1_start}
		\IfThen{$z_U= z_L^{(t)}$}
		{\Return optimal solution $\hat x := x^{(t)}$ and optimal value $z^\star:=z_L^{(t)}$} \label{checkOptimality}
		\ForAll{$P\in \mathcal{P}^{(t)}$}
        \State Set $p^P := \sum_{s\in P} p^s$, $h^P := \frac{\sum_{s\in P} p^s h^s}{\sum_{s\in P} p^s}$,  $T^P := \frac{\sum_{s\in P} p^s T^s}{\sum_{s\in P} p^s}$ and $\xi^P := (T^P,h^P)$
		\If{subproblem $Q(x^{(t)},\xi^P)$ is infeasible}
		    \State Get a dual extreme ray $\tilde\lambda^P$ and add the following cut to \text{MP}  \Comment{feasibility cut} \label{alg:seq1_feascut}
             \[(h^P-T^P x)^\intercal \tilde\lambda^P \leq 0\]
        \Else 
        \State Let $\hat{\lambda}^P$ be an optimal dual solution of $Q(x^{(t)},\xi^P)$ 
        \If{$ p^P(h^P-T^P x^{(t)})^\intercal \hat{\lambda}^P  > \sum_{s\in P} p^s{\theta^{s}}^{(t)}$}
		    \State Add to \text{MP} the following cut \Comment{optimality cut} \label{alg:seq1_optcut}
\[ p^P (h^P-T^P x)^\intercal \hat{\lambda}^P \leq \sum_{s\in P} p^s\theta^s \]
		   \EndIf
		   \EndIf
		\EndFor 
		\If{a feasibility or optimality cut has been added to \textsc{MP}}
		    \Goto{Step \ref{solve_MP}} \label{alg:seq1_end}
		\Else
%
	    \ForAll{$s\in \mathcal{S}$}\label{alg:seq2_start}
	        \State Solve and store either extreme ray $\tilde{\lambda}^s$ 
	        or optimal 
	        solution $\hat{\lambda}^s$ of
	        $Q(x^{(t)},\xi^s)$
	    \EndFor
	    \State Set $z_{U} := \min\left\{z_U,c^\intercal x^{(t)} + \sum_{s\in \mathcal{S}} p^s Q(x^{(t)},\xi^s)\right\}$
        \If{
        $\exists$ $P\in \mathcal{P}^{(t)}$, $ s \in P$ s.t.   $Q(x^{(t)},\xi^s)$ is infeasible, or s.t.  $(\hat\lambda^s)_{s\in P}$ do not satisfy  
        \eqref{eq:conditions}} \label{alg:condition} 
            \State Run \texttt{refinement procedure} which
            refines $\mathcal{P}^{(t)}$ to obtain a new partition $\mathcal{P}^{(t+1)}$   \label{alg:refinement}
            \State Set $t := t+1$ and \Goto{Step \ref{checkOptimality}} \label{alg:disggregation_step}
        \label{alg:seq2_end}
        \Else 
		    \State  \Return  $z^\star:=z_L^{(t)}, \hat x := x^{(t)}$
		\EndIf
	\EndIf
	\end{algorithmic}
	\caption{Implementation of the Benders Adaptive-cuts Method}
	\label{alg:gen_benders}
\end{algorithm} 

In each iteration $t$, we denote by $\mathcal{P}^{(t)}$ the current partition of the scenario set $\mathcal{S}$. At the beginning, all the scenarios are aggregated, so we have $\mathcal{P}^{(0)}=\{\mathcal{S}\}$.
When solving the initial LP given in Step \ref{step:2}, we ensure that the problem is  bounded by restricting $\theta^s$ variables from below ($\theta \ge \LB$), where e.g., $\LB = \sum_{i=1}^n \min \{q_i \LB_i,q_i \UB_i\}$,  and $\LB_i$ and $\UB_i$ correspond to a global lower and upper bound of $y_i$, respectively. In each iteration $t$ we are  solving the associated problem relaxation given by \eqref{Benders_adaptive} with respect to $\mathcal{P}^{(t)}$. We then check if the conditions of Proposition \ref{prop_optimal} are met, and if yes, we terminate with a guarantee that an optimal solution is found. Otherwise, we refine the partition $\mathcal{P}^{(t)}$ and repeat the above procedure. We emphasize that we do not start solving \eqref{Benders_adaptive} with respect to $\mathcal{P}^{(t)}$ from scratch, but we keep all previously added cuts instead. Moreover, for any solution $x \in \mathcal{X}$ encountered along this process, we calculate the upper bound obtained by evaluating the expected value of the recourse, and potentially update the global upper bound. Thus, the algorithm either terminates because the condition of Proposition \ref{prop_optimal} is met, or because the lower bound at iteration $t$, corresponding to $v(\mathcal{P}^{(t)})$  (see Corollary \ref{cor:partitions}) matches the best found upper bound. 

At a more detailed level, in the first set of instructions (Steps \ref{alg:seq1_start}-\ref{alg:seq1_end}), we apply the usual Benders approach with the aggregated cuts, adding optimality and feasibility cuts at each iteration. Note that this step includes fewer optimality cuts (Step~\ref{alg:seq1_optcut}) than does the  Benders multi-cuts method  because we consider only each element of the partition, not each scenario. Similarly, we also add fewer feasibility cuts (Step~\ref{alg:seq1_feascut}) because the subproblem has to be feasible for the aggregated subproblems, not for each individual scenario.  Once the aggregated Benders problem is optimized, we revise each element of the partition $\mathcal{P}$ to verify that 
conditions \eqref{eq:condition1} and \eqref{eq:condition2} are satisfied for this subset of scenarios (Steps \ref{alg:seq2_start}-\ref{alg:seq2_end}) for the current 
solution $x^{(t)}$. 
Note that this is the only part of the algorithm where all second-stage subproblems are required to be optimized.  If there exists a set $P \in \mathcal{P}^{(t)}$ such that at least one of its subproblems $s \in P$ is infeasible, or if conditions of Proposition \ref{prop_optimal} are not met, then the set $P$ has to be refined to meet these conditions, and a new partition has to be created. The 
underlying \texttt{refinement procedure} (Step \ref{alg:refinement}) is problem specific, and we discuss a few examples in the next section. When the conditions stated in Step \ref{alg:condition} are satisfied, then $x^{(t)}$ is the optimal solution for problem~\eqref{eq:main}, according to Proposition~\ref{prop_optimal}. 
Note that if $\mathcal{P}=\mathcal{S}$, then the algorithm is equivalent to the Benders multi-cuts method.

A key part of the algorithm consists in refining of the current partition to satisfy conditions stated in Step \ref{alg:condition}. Let $P$ be an element from $\mathcal{P}^{(t)}$, we distinguish the following two cases:
\begin{itemize}
    \item When all scenarios $s \in P$ are feasible, one way to perform the refinement of $P$ is to group all scenarios with the same dual solution $\hat{\lambda}^s$, as presented in \citet{song2015adaptive}. However, this condition might be too strong, as shown in~\cite{ramirez2021generalized}, generating a partition with too many subsets. On the other hand, to compute the minimal partition $\mathcal{P}^{(t+1)}$ satisfying~\eqref{eq:conditions} for $x^{(t)}$ can be computationally difficult. 
Less restricting conditions can be obtained when the subproblem associated to the uncertainty parameters have particular substructures. One such example can be found in the stochastic flow problems, where many of the components of ${h}^{s}$ are equal to 0 for all scenarios. In this case, we only need to consider dual variables associated to the remaining components (see Section \ref{sec:SMCF} for more details).
\item If the current solution $x^{(t)}$ produces infeasible subproblems for some $s \in P$, we can group all these infeasible scenarios sharing the same dual extreme ray $\tilde{\lambda}$. In fact, since $(h^s-T^s x)^\intercal \tilde{\lambda} >0$ for these scenarios,  then the aggregated feasibility cut $p^P (h^P-T^P x)^\intercal \hat{\lambda} \leq 0$ will eliminate $x^{(t)}$ from the set of feasible solutions. Note that we only need to add one feasible cut for each subset of scenarios  sharing the same dual extreme ray. Indeed, this ensures that at the end of the algorithm all subproblems $s \in P$ are feasible, satisfying constraints~\eqref{eq:aggregatedBender_duplicated_feascut} from Proposition~\ref{prop_optimal}.  
\end{itemize}


\section{Benders Adaptive-Cuts Applied to Stochastic Network Flow Problems}\label{sec:BendersStochNetworks}

Since the 
1950's, network flow optimization problems have been comprehensively studied~\citep{costa2005survey,zargham2013accelerated,bertsimas2003robust,AhujaBook}. The problems are highly relevant to a variety of applications, including transportation~\citep{zapfel2002planning}, telecommunications~\citep{amaldi2008optimization,leitner2020exact,LjubicMPG21}, energy~\citep{jin2013joint,geidl2005modeling}, and others fields. Even though efficient combinatorial algorithms exist for specific problems such as max-flow or min-cost-flow~\citep{AhujaBook},
for a plethora of related (sometimes NP-hard) problems, efficient  MIP-based exact solution methods need to be
developed. This is especially true when network flow problems  
appear in more complex optimization settings, such as mixed-integer linear programming, nonlinear programming, large-scale optimization and, in particular, stochastic programming. 

Stochastic network flows appear as a natural choice for many applications arising in transportation and logistics, where networks have to be designed subject to uncertain transportation demand. A similar situation occurs when a subset of facilities has to be open in order to serve customers' demands, but the actual values of this demand are only discovered in the future. 
In our study, we focus on three stochastic network flow problems: a) the capacity planning problem (CPP), b) the stochastic multicommodity flow problem (SMCF), and c) the stochastic facility location problem with risk aversion (FL-CVaR).
We choose the first two problems because they also
frequently appear in the related literature when it comes to efficient implementations of the Benders decomposition approach \citep{rahmaniani2018accelerating,crainic2021partial}. The third problem complements the former two, as it involves binary first-stage variables and models a risk-averse objective function. 
Moreover, Benders single-cuts and multi-cuts methods for these three problems behave differently, as we will see in the computational results.
 
All three problems 
are defined on a directed graph $G=(V,E)$, where in the first-stage a decision is made  
concerning the structure of the network and the capacity of some of its nodes or arcs. In the second-stage, a routing decision is made once the uncertainty (e.g., random demand) is revealed. In both cases, the cost of the first-stage decisions plus the expected routing cost of the second-stage, are minimized. 
In the remainder of this section, for each of the three problems, we provide
a mathematical formulation  and a specific recipe for the refinement procedure. Results of our implementation and a comparison with alternative methods 
are given  in Section~\ref{sec:computational}.

\subsection{The Capacity Planning Problem (CPP)}

Our first example is a 
stochastic network flow problem, originally proposed by~\cite{louveaux1988optimal}, in which we are given  a bipartite graph $G=(V,E)$, representing an electricity planning network with the set of nodes $V=V_L\cup V_R$ and the set of arcs $E=V_L\times V_R$.
The nodes from $V_L$ represent power terminals, and the nodes from $V_R$ are the customer nodes with uncertain demand. There are $|\mathcal{K}|$ different resources, and 
a unit of capacity allocated to terminal
$i \in V_L$ uses $a_{ik}$ units of resource $k \in \mathcal{K}$. Total availability of each resource $k \in \mathcal{K}$ limited to $r_k >0$.
In the first stage one has to decide about the capacities of each power terminal $i \in V_L$, while respecting the resource limitations. 
In the second stage, once the demands at the nodes $j \in V_R$ are revealed, the electricity  can  be transported from the terminals to demand nodes. 
Transporting one unit of demand between nodes $i \in V_L$ and $j \in V_R$ invokes the cost/profit of $e_{ij}$, which can include the transportation cost and revenue from sales, so $e_{ij}$ can be positive (cost) or negative (profit). Note that the demand does not need to be satisfied, so installing zero capacity at the terminals is a feasible solution for any scenario.  The objective function consists of  minimizing the capacity installation cost at the first stage plus the expected transportation cost/profit  at the second stage.  

\paragraph{Mathematical formulation:}
Following our notation, the CPP can be stated as follows:
\begin{subequations}
    \label{CP_stoch}
    \begin{align}
        \min_{x\in\mathbb{R}_+^{|V_L|}} \sum_{i\in V_L} c_i x_i &+ \sum_{s\in\mathcal{S}} p^s Q(x,\xi^s)& \\
        \sum_{i\in V_L} a_{ik} x_i &\leq r_k &  ~k\in\mathcal{K} \label{CP_stoch:eq_budget}
    \end{align}
\end{subequations}
In this model, the first-stage variable $x\in\mathbb{R}_+^{|V_L|}$ represents the capacity installed at each supply node $i\in V_L$ with unit cost $c_i$. The budget constraints \eqref{CP_stoch:eq_budget} are imposed for each resource $k\in \mathcal{K}$. 
Given $\hat{x}$, the second-stage subproblem $Q(\hat{x},\xi^s)$ is a min-cost flow problem
\begin{subequations}
    \label{CP_stoch_subp}
    \begin{align}
        Q(\hat{x},\xi^s) := \min_{y^s\in\mathbb{R}_+^{|E|}  } &~  \sum_{ij\in E} e_{ij}  y^s_{ij}&   \\
        \sum_{i\in V_L} y^s_{ij} & \leq d^s_j, &  ~j\in V_R
        \label{eq:CPP_dem}\\
        \sum_{j\in V_R} y^s_{ij} &\leq  \hat{x}_i &  ~i \in V_L 
        \label{eq:CPP_cap}
    \end{align}
\end{subequations}
where the second-stage variable $y^s\in\mathbb{R}_+^{|E|}$ represents the flow from  supply nodes to  demand nodes.  This flow 
must satisfy the capacity constraint \eqref{eq:CPP_cap} provided by first-stage variables $\hat{x}$ at each supply node and the maximum (random) demand $d^s$ at each demand node, cf. constraints \eqref{eq:CPP_dem}. Note that $e_{ij}$ can be negative, representing a profit for assigning flow on arc $ij\in E$.
Given the first-stage decision $\hat{x}$, the dual of the subproblem associated to scenario $s \in \mathcal{S}$ is as follows:
\begin{subequations}
    \label{cp_dualsp}
    \begin{align}
        \mathcal{Q}(\hat{x},\xi^s):= -\min_{\mu^s \in \mathbb{R}_+^{|V_R|}, \nu^s\in \mathbb{R}_+^{|V_L|}} \sum_{j\in V_R} d^s_j \mu^s_j &+ \sum_{i\in V_L} \nu^s_{i} \hat{x}_i& \\
        \mu^s_j + \nu^s_{i} &\geq -e_{ij} & ~ ij\in E 
    \end{align}
\end{subequations}

\paragraph{Benders cuts:}
An important remark about the capacity planning problem is that the subproblem solution $y^s=0$ is feasible for any feasible first-stage decision $\hat{x}$, so there always exists an optimal solution for dual of the subproblem  $\mathcal{Q}(\hat{x},\xi^s)$. Hence, there are only Benders optimality cuts to be dealt with when implementing a Benders decomposition procedure.

Given a first-stage solution $\hat{x}$  and the corresponding optimal  solutions $\left(\hat{\mu}^s,\hat{\nu}^s\right)$ of the dual of $Q(\hat{x},\xi^s)$ for each $s\in \mathcal{S}$, 
we present the inequalities that are added to reformulations \eqref{Benders_multicut} and \eqref{Benders_singlecut} for the cases of the multi-cuts and single-cuts implementation, respectively:

\begin{equation*}
    -\sum_{j\in V_R} d_j^s \hat{\mu}^s_j - \sum_{i\in V_L} \hat{\nu}_i^s x_i  \le \theta^s \quad   ~ s\in \mathcal{S} 
    \tag{{CPP multi-cuts}}
\end{equation*}

\begin{equation*}
    -\sum_{s \in \mathcal{S}} p^s\left[\sum_{j\in V_R} d_j^s \hat{\mu}^s_j + \sum_{i\in V_L} \hat{\nu}_i^s x_i\right]   \le \Theta. 
    \tag{{CPP single-cuts}}
\end{equation*}
 
Given a partition  $\mathcal{P}$ of the set $ \mathcal{S}$, for any $P\in \mathcal{P}$, {let $d_j^P:=\sum_{s\in P}p^s d_j^s/p^P$ be the expected demand of the node $j \in V_R$ with respect to the aggregated scenarios from $P$. }
Let $(\hat\mu^P,\hat\nu^P)$ be an optimal solution of  the subproblem~\eqref{cp_dualsp} using  demands $d^P$.
It follows from Proposition~\ref{prop_adapt_valid_cut} that 
the following cuts are valid for the CPP:  
\begin{equation*}
    -p^P\left[\sum_{j\in V_R} d_j^P \hat{\mu}^P_j + \sum_{i\in V_L} \hat{\nu}_i^P x_i\right]   \le \sum_{s\in P} p^s \theta^s \quad ~ P \in \mathcal{P}. 
    \label{eq:adaptiveCPP}
    \tag{{CPP adaptive-cuts}}
\end{equation*}

\paragraph{Refinement procedure:}
In order to explain the refinement procedure for the CPP which we have implemented as step \ref{alg:disggregation_step} of Algorithm~\ref{alg:gen_benders}, we start with the following result derived from Proposition~\ref{prop_optimal}:
\begin{corollary}
Given a partition $\mathcal{P}$ of the set $\mathcal{S}$, the following condition ensures that adding 
\eqref{eq:adaptiveCPP} 
for each $P\in\mathcal{P}$ provides an optimal solution of
the CPP:
\begin{equation}
        \sum_{j\in V_R} \left[ \left(\sum_{s\in P} \frac{p^s}{p^P} d^s_j \right)\cdot \left(\sum_{s\in P} \frac{p^s}{p^P} \hat{\mu}^s_j \right)\right] = \sum_{j\in V_R} \frac{1}{p^P} \sum_{s \in P} p^s d_j^s \hat{\mu}^s_j, \qquad \text{for all $P \in \mathcal{P}$}
        \label{eq:condition_CP}
\end{equation}
\end{corollary}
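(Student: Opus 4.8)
The idea is to obtain this corollary as a direct specialization of Proposition~\ref{prop_optimal} to the CPP, by showing that the single scalar identity \eqref{eq:condition_CP} is exactly the pair of conditions \eqref{eq:condition1}--\eqref{eq:condition2} written out for this problem. First I would put the second-stage subproblem \eqref{CP_stoch_subp} into the standard form \eqref{eq:subproblem}, introducing a slack for each demand constraint \eqref{eq:CPP_dem} and each capacity constraint \eqref{eq:CPP_cap}, so that its right-hand side reads $h^s - T^s x$ with $h^s = (d^s,\mathbf{0})$ (the demand rows carry $d^s_j$, the capacity rows carry $0$) and $T^s$ the fixed matrix that injects $x$ into the capacity rows only. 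Two structural facts then stand out: (i) $T^s$ does not depend on $s$, i.e.\ $T^s \equiv T$; and (ii) the random data enters only through the demand block of $h^s$. Under the natural identification of the generic dual multiplier $\hat\lambda^s$ with $(\hat\mu^s,\hat\nu^s)$ from \eqref{cp_dualsp} (the overall sign in \eqref{cp_dualsp} is immaterial below, since every expression involved is homogeneous of degree one in $\hat\lambda^s$), the adaptive optimality cut \eqref{eq:aggregatedBender_duplicated} is precisely \eqref{eq:adaptiveCPP}, while the feasibility cuts \eqref{eq:aggregatedBender_duplicated_feascut} are vacuous because $y^s=0$ is feasible in \eqref{CP_stoch_subp} for every $x^*\ge 0$; hence the CPP master problem equipped with the cuts \eqref{eq:adaptiveCPP} is exactly the instance of \eqref{Benders_adaptive} to which Proposition~\ref{prop_optimal} applies.

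Next I would verify the two hypotheses at the optimal solution $x^*$. Condition \eqref{eq:condition2} is automatic: using $T^s\equiv T$, both of its sides collapse to $p^P\,(Tx^*)^\intercal \sum_{s\in P} p^s \hat\lambda^s$, so the identity holds with no assumption on the duals. For condition \eqref{eq:condition1}, I would substitute $h^s=(d^s,\mathbf{0})$; the $\hat\nu^s$-components of $\hat\lambda^s$ then multiply zeros on both sides and drop out, leaving
\[ \Bigl(\sum_{s\in P} p^s\Bigr)\sum_{s\in P} p^s \sum_{j\in V_R} d^s_j \hat\mu^s_j \;=\; \Bigl(\sum_{s\in P} p^s d^s\Bigr)^{\!\intercal}\Bigl(\sum_{s\in P} p^s \hat\mu^s\Bigr) \;=\; \sum_{j\in V_R}\Bigl(\sum_{s\in P} p^s d^s_j\Bigr)\Bigl(\sum_{s\in P} p^s \hat\mu^s_j\Bigr). \]
Dividing through by $(p^P)^2=\bigl(\sum_{s\in P}p^s\bigr)^2$ and writing the inner product out coordinate by coordinate over $j\in V_R$ turns this into \eqref{eq:condition_CP} verbatim (up to exchanging the two sides). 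Thus ``\eqref{eq:condition_CP} holds for all $P\in\mathcal{P}$, with $\hat\mu^s$ optimal for the dual of $Q(x^*,\xi^s)$'' is the same statement as ``\eqref{eq:condition1}--\eqref{eq:condition2} hold for all $P\in\mathcal{P}$''.

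With that equivalence in hand, Proposition~\ref{prop_optimal} applies word for word and delivers the claim: any optimal $x^*$ of the CPP master problem with the cuts \eqref{eq:adaptiveCPP} is optimal for \eqref{CP_stoch}. I expect the only delicate point to be the first step: setting up the equality-form reformulation with the correct slack variables and matching the index blocks and signs of the generic multipliers $\hat\lambda^s$ against $(\hat\mu^s,\hat\nu^s)$, so that one can be certain the $\hat\nu^s$ contributions in \eqref{eq:condition1} really do cancel identically (rather than leaving a residual term) and that \eqref{eq:condition2} collapses exactly because $T^s\equiv T$. Once this bookkeeping is settled, the remaining manipulations are elementary algebra.
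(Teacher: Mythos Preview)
Your proposal is correct and follows exactly the route the paper intends: the corollary is stated in the paper as ``derived from Proposition~\ref{prop_optimal}'' with no further argument, and your plan simply fills in that derivation by identifying $h^s=(d^s,\mathbf{0})$, $T^s\equiv T$, and $\hat\lambda^s\leftrightarrow(\hat\mu^s,\hat\nu^s)$, so that \eqref{eq:condition2} collapses trivially and \eqref{eq:condition1} becomes \eqref{eq:condition_CP}. The bookkeeping you flag (slack variables, sign conventions, vanishing of the $\hat\nu^s$-block) is indeed the only thing to check, and your handling of it is sound.
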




Sufficient conditions for a given partition $\mathcal{P}$ to satisfy the above equation are, for example, when either $d^s = d^{s'}$ or $\hat\mu^s = \hat\mu^{s'}$ for all pairs of scenarios $s,s'\in P$.  Indeed, for the vector of dual variables ($\mu$) and the vector of the associated right-hand-side ($d^s$), equation \eqref{eq:condition_CP} requires that the weighted average (for a given $P$) of their scalar product is equal to the scalar product of their weighted averages. 
This condition is naturally satisfied when one of these vectors is a constant ($d$, $\mu$, or both) across all scenarios $s \in P$. 
Hence, for each $P \in \mathcal{P}$, we construct $\mathcal{P}'$ by solving the subproblem~\eqref{cp_dualsp} for each $s\in P$ and group them  into subsets where $\mu^s$ has the same value.

\subsection{The Stochastic Multicommodity Flow Problem (SMCF)}
\label{sec:SMCF}

Our second problem is 
a network design optimization problem under uncertainty. 
We are given a set of commodities $\mathbb{K}$, such that for each $k \in \mathbb{K}$ its origin $O(k) \in V$ and its destination $D(k)\in V$, are known, but its demand $d_k$ is subject to uncertainty. We are also given the cost of routing a single unit of demand of $k$ along an arc $ij \in E$, denoted by $c_{ijk} >0$. In the first stage, we need to decide on the fraction of the given capacity $u_{ij}>0$ of each arc $ij \in E$ that need to be installed in order to simultaneously  route all the commodities. Once the uncertainties are revealed, the 
second-stage decision consists of finding the minimum-cost routing of the commodities over the network installed in the first stage.  
The problem has been widely studied both in its  integer~\citep{gendron1999multicommodity} and linear~\citep{Barnhart2001} versions, that is, where the capacity of each arc is either 0 or a nominal capacity $u_{ij}$ for the former, or a fraction of this nominal capacity, for the latter.
Various Benders approaches have been developed to solve the problem; the most recent and state-of-the-art implementation of Benders decomposition by \cite{rahmaniani2018accelerating} can efficiently address instances of the SMCF  with a small number of scenarios (between 16 to 64). 
{However, our aim is to focus on more challenging SMCF instances for which  the size of the uncertainty set can be very large (and could go up to 50,000 scenarios, as we shall see in our numerical study). Our goal is to understand the impact of the proposed Benders adaptive-cuts and to obtain possible improvements of the state-of-the-art for  solving these challenging instances.}

\paragraph{Mathematical formulation:}
The SMCF is formulated as follows
\begin{equation}
\label{shortsub}
\min\limits_{x\in\left[0,1\right]^{|E|} } \sum_{ij\in E} f_{ij}x_{ij} + \sum_{s \in \mathcal{S}} p^s Q(x,\xi^s),
\end{equation}
where the first-stage variable $0 \le x_{ij} \le 1$ indicates the fraction of the nominal capacity $u_{ij}$ that should  be installed on the arc $ij \in E$, and $f_{ij} >0$ is the installation cost per unit of capacity. We are given a discrete set of scenarios $\mathcal{S}$ modeling the uncertain realizations of the demand of each commodity $k \in \mathbb{K}$.
For a given first-stage decision $\hat{x}$, the second-stage problem $Q(\hat{x},\xi^s)$ associated to scenario $s \in \mathcal{S}$, is a multicommodity flow problem on the network $G$ with arc capacities defined as $\hat x_{ij} u_{ij}$ and with demands $d^s_k \ge 0$, for each $k \in \mathbb{K}$:
\begin{subequations}
	\begin{align}
	Q(\hat{x},\xi^s) :=&\min \sum_{ij\in E}\sum_{k\in \mathbb{K}} c_{ijk}y_{ijk}^s  \label{of1}\\
	\sum_{j:ij\in E} y_{ijk}^s-\sum_{j:ji\in E} y_{jik}^s & =
	\begin{cases}
	d_k^s& \text{ if } i=O(k) \qquad \\
	-d_k^s& \text{ if }i=D(k) \qquad \\
	0& \text{ otherwise }
	\end{cases}
	& ~ i\in V, k \in \mathbb{K}   \label{FlowConst}\\
	\sum_{k\in \mathbb{K}} y_{ijk}^s &\le u_{ij} \hat{x}_{ij} & ~ ij \in E   \label{CapacityConst}\\
	y_{ijk}^s &\ge 0 &  ~ij\in E, k \in \mathbb{K}
	\end{align}
	\label{MainpSFCMFP}
\end{subequations}
In this model, the second-stage variables $y_{jik}^s \ge 0$ indicate the amount of flow of commodity $k$ routed through the arc $ij \in E$ in scenario $s$.
Constraints \eqref{FlowConst} are flow-preservation constraints, whereas the capacity constraints \eqref{CapacityConst} guarantee that the installed capacity on each arc cannot be exceeded. 

To construct Benders cuts for this problem, we define the dual form of \eqref{MainpSFCMFP} for a given first-stage solution $\hat{x}$.
\begin{subequations}
	\begin{align}
    	Q^D(\hat{x},\xi^s):=  \max\limits_{\lambda^s \in \mathbb{R}^{|V||\mathbb{K}|}, \mu^s \in \mathbb{R}^{|E|}_{+}} \sum_{k\in \mathbb{K}} d_k^s (\lambda_{O(k),k}^s - \lambda_{D(k),k}^s) - \sum_{ij\in E} \left( u_{ij} \hat{x}_{ij} \right) \mu_{ij}^s
    	&   \label{objDualMainSSP}\\
    	\lambda_{ik}^s-\lambda_{jk}^s-\mu_{ij}^s  \le  c_{ijk} \quad ij\in E,~k\in \mathbb{K}.   \label{flowdualSSP} 
	\end{align}
	\label{DualMainSSP}
\end{subequations}
Notably, for this problem, $\hat{x}$ does not always yield a feasible subproblem \eqref{MainpSFCMFP}, so its dual \eqref{DualMainSSP} could be unbounded in which case we will need to derive 
feasibility cuts from 
its extreme rays.

\paragraph{Benders cuts:}
From \eqref{DualMainSSP}, we define the feasibility cuts, which are the same no matter if the single-cuts or the multi-cuts version is utilized. 
For a given $\hat{x}$, where $Q^D(\hat{x},\xi^s)$ is unbounded for some scenario $s\in\mathcal{S}$,  let $(\tilde\lambda^s, \tilde\mu^s)$ be an extreme ray of the corresponding subproblem. Hence, the following feasibility cuts are added to master problem \eqref{Benders_multicut} or \eqref{Benders_singlecut}: 

\begin{equation*}
    \sum\limits_{k\in \mathbb{K}} d_k^s \left(\tilde{\lambda}_{O(k),k}^s - \tilde{\lambda}_{D(k),k}^s\right) - \sum_{ij\in E} u_{ij} \tilde{\mu}_{ij}^s x_{ij}  \le 0 
    \tag{{Feasibility cuts}} \label{Feasibility cuts}
\end{equation*}

Similarly, in the case that $Q^D(\hat{x},\xi^s)$ is feasible, let $(\hat\lambda^s, \hat\mu^s)$ be the optimal solution of $Q^D(\hat{x},\xi^s)$ for each $s\in\mathcal{S}$; then, the following optimality cuts are added to the master problem, if violated:
\begin{equation*}
    \sum_{k\in \mathbb{K}} d_k^s \left(\hat{\lambda}_{O(k),k}^s - \hat{\lambda}_{D(k),k}^s\right) - \sum_{ij\in E} u_{ij} \hat{\mu}_{ij}^s x_{ij}  \le \theta^s  \quad s \in \mathcal{S} \tag{Optimality multi-cuts} \label{Optimality multi-cuts}
\end{equation*}

\begin{equation*}
    \sum_{s \in \mathcal{S}} p^s\left[ \sum_{k\in \mathbb{K}} d_k^s \left(\hat{\lambda}_{O(k),k}^s - \hat{\lambda}_{D(k),k}^s\right) - \sum_{ij\in E} u_{ij} \hat{\mu}_{ij}^s x_{ij}\right]   \le \Theta \tag{Optimality single-cuts} \label{Optimality single-cuts}
\end{equation*}

Whilst the Benders multi-cuts method may add both \eqref{Feasibility cuts} and \eqref{Optimality multi-cuts} in a single iteration, the single-cuts method can generate a new optimality cut only when all subproblems have an optimal solution.

On the other hand, following Proposition~\ref{prop_adapt_valid_cut}, for a given partition $\mathcal{P}$ of $\mathcal{S}$, the feasibility and optimality aggregated cuts that can be added to the master problem, if violated, are

\begin{equation*}
\label{feasaggcuts}
p^P \left[ \sum_{k\in \mathbb{K}} d_k^P \left(\tilde{\lambda}_{O(k),k}^P - \tilde{\lambda}_{D(k),k}^P\right) - \sum_{ij\in E} u_{ij} \tilde{\mu}_{ij}^P x_{ij}\right]   \le 0 \quad  ~ P\in \mathcal{P}
\tag{Feasibility adaptive-cuts}
\end{equation*}

\begin{equation*}
\label{optaggcuts}
p^P \left[ \sum_{k\in \mathbb{K}} d_k^P \left(\hat{\lambda}_{O(k),k}^P - \hat{\lambda}_{D(k),k}^P\right) - \sum_{ij\in E} u_{ij} \hat{\mu}_{ij}^P x_{ij}\right]   \le \sum_{s\in P} p^s \theta^s \quad  ~ P\in \mathcal{P}
\tag{Optimality adaptive-cuts}
\end{equation*}
where $(\tilde{\lambda}^P,\tilde{\mu}^P)$ and $(\hat{\lambda}^P,\hat{\mu}^P)$ are either extreme rays or optimal extreme points of the  subproblem \eqref{DualMainSSP} considering the aggregated demand $d_k^P:=\sum_{s\in P}p^s d_k^s/p^P$ for each $P\in\mathcal{P}$. 

Note that feasibility cuts are required to obtain a feasible solution for the weighted aggregated demand $d^P$ for each $P\in\mathcal{P}$, not for each scenario $s\in \mathcal{S}$, which  potentially reduces the number of feasibility cuts required to obtain a feasible solution for the problem. 

\paragraph{Refinement procedure:}
The following corollary follows directly from Proposition~\ref{prop_optimal}: 
\begin{corollary}
Given a partition $\mathcal{P}$ of the set $\mathcal{S}$, the following condition guarantees that adding Benders adaptive-cuts for each $P\in\mathcal{P}$ is sufficient to obtain an optimal solution of the SMCF: 
\begin{multline}
        \sum_{k\in\mathcal{K}} \left[ \left(\sum_{s\in P} \frac{p^s}{p^P} d^s_k\right)\cdot \left(  \sum_{s\in P}\frac{p^s}{p^P}\left(\lambda^s_{O(k),k}-\lambda^s_{D(k),k}\right) \right)\right] \\ = \sum_{k\in\mathcal{K}} \left[ \sum_{s\in P} \frac{p^s}{p^P} \cdot d^s_k\cdot \left(\lambda^s_{O(k),k}-\lambda^s_{D(k),k}\right) \right]
      \label{eq:SFCMCF_condition}
\end{multline}
\end{corollary}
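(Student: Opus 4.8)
The plan is to obtain this corollary as a direct specialization of Proposition~\ref{prop_optimal}: once the abstract conditions \eqref{eq:condition1}--\eqref{eq:condition2} are written out with the concrete data $(T^s,h^s)$ of the SMCF subproblem~\eqref{MainpSFCMFP}, they collapse to \eqref{eq:SFCMCF_condition}, and the claim follows. I would first put \eqref{MainpSFCMFP} into the standard form $Wy^s=h^s-T^sx$, $y^s\ge 0$ (adding slacks to the capacity inequalities~\eqref{CapacityConst}) and record the two structural facts that drive the argument. The only random data are the demands $d^s_k$, and they enter exclusively through the right-hand side of the flow-conservation constraints~\eqref{FlowConst}; hence the technology matrix $T^s\equiv T$ does not depend on $s$, and $h^s$ is the vector whose $(i,k)$-component equals $d^s_k$ if $i=O(k)$, $-d^s_k$ if $i=D(k)$, and $0$ otherwise (in particular $h^s$ vanishes on the rows indexed by arcs, i.e.\ on the capacity constraints).

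With $T^s$ constant on each $P\in\mathcal{P}$, condition~\eqref{eq:condition2} holds automatically for every partition: writing $w:=Tx$, both sides equal $p^P\, w^\intercal\big(\sum_{s\in P}p^s\hat\lambda^s\big)$. For condition~\eqref{eq:condition1}, I would use the explicit form of $h^s$: for the full dual vector $\hat\lambda^s=(\lambda^s,\mu^s)$ of~\eqref{DualMainSSP},
\[ {h^s}^\intercal\hat\lambda^s \;=\; \sum_{k\in\mathbb{K}} d^s_k\big(\lambda^s_{O(k),k}-\lambda^s_{D(k),k}\big), \]
with the $\mu^s$ components dropping out since $h^s$ is zero on the capacity rows. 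Substituting this identity into both sides of~\eqref{eq:condition1}, expanding the inner product $\big(\sum_{s\in P}p^sh^s\big)^\intercal\big(\sum_{s\in P}p^s\hat\lambda^s\big)$ row by row, and dividing through by $(p^P)^2=\big(\sum_{s\in P}p^s\big)^2$, turns~\eqref{eq:condition1} into precisely~\eqref{eq:SFCMCF_condition}. Thus a partition satisfying~\eqref{eq:SFCMCF_condition} satisfies both parts of~\eqref{eq:conditions}; together with the feasibility adaptive-cuts, which (after refinement grouping scenarios sharing a dual extreme ray) enforce feasibility of every $s\in\mathcal{S}$ and hence constraints~\eqref{eq:aggregatedBender_duplicated_feascut}, Proposition~\ref{prop_optimal} applies and the optimal solution of the adaptive-cuts model is optimal for the SMCF.

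The only delicate point is the index bookkeeping in passing from the matrix--vector form of~\eqref{eq:condition1} to the commodity-indexed sums of~\eqref{eq:SFCMCF_condition}: one must keep track of the fact that the rows of $h^s$ and $\lambda^s$ are indexed by node--commodity pairs, that each commodity $k$ contributes only through rows $O(k)$ and $D(k)$ with opposite signs, and that consequently $\big(\sum_{s\in P}p^sh^s\big)^\intercal\big(\sum_{s\in P}p^s\hat\lambda^s\big)$ collapses to $\sum_{k\in\mathbb{K}}\big(\sum_{s\in P}p^sd^s_k\big)\big(\sum_{s\in P}p^s(\lambda^s_{O(k),k}-\lambda^s_{D(k),k})\big)$. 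No new estimates are needed beyond Proposition~\ref{prop_optimal}; everything else is a routine substitution.
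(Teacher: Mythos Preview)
Your proposal is correct and follows the same approach as the paper: the paper's justification is the single sentence ``this condition follows from \eqref{eq:condition1} because only the demand is uncertain,'' and you have simply spelled out that argument in full, noting that $T^s\equiv T$ makes \eqref{eq:condition2} automatic and that the structure of $h^s$ reduces \eqref{eq:condition1} to \eqref{eq:SFCMCF_condition}. Your additional bookkeeping on the node--commodity indexing and the vanishing of the $\mu^s$ contribution is exactly the routine substitution the paper leaves implicit.
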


Indeed, this condition follows from \eqref{eq:condition1} because only the demand is uncertain. Note that a given element $P \in \mathcal{P}$ satisfies condition \eqref{eq:SFCMCF_condition}, for example, when either 
\begin{align}
d_k^s = d_k^{s'}  \text{ or } \lambda^s_{O(k),k}-\lambda^s_{D(k),k} = \lambda^{s'}_{O(k),k}-\lambda^{s'}_{D(k),k} \quad k\in\mathcal{K}
\end{align}
for all pairs of scenarios $s,s'\in P$. 
Hence, we apply the following rule to create $\mathcal{P}'$, a refinement of a given partition $\mathcal{P}$:
For each $P\in \mathcal{P}$, we solve the dual subproblem \eqref{DualMainSSP} for each $s\in P$. If there are unbounded subproblems, we group the scenarios sharing the same extreme ray. For the feasible subproblems, we group them into subsets where the vector $(\lambda^s_{O(k),k}-\lambda^s_{D(k),k})_{k\in\mathcal{K}}$ has the same value.

\subsection{The Stochastic Facility Location Problem under risk aversion (FL-CVaR)}
\label{sec:FLCVAR}

Our third problem is a facility location problem 
with uncertain demand and a risk-averse decision maker. We are given a set of potential locations $I$ to install a facility, each of them with a fixed installation cost $f_i \ge 0$ and a capacity $K_i \ge 0$ for $i\in I$. 
For each client $j\in J$, and each facility $i \in I$, a transportation cost  $c_{ij}\ge 0$ is incurred for each unit of demand of client $j$ if $j$ assigned to facility $i \in I$.
The demand $d_j \ge 0$ of each client $j\in J$ is uncertain. 
 In the first stage, the decision maker decides which facilities to open. In the second stage, upon the realization of the uncertain demand (say $\tilde{d}_j$), each client is allocated to one of the open facilities, so that their capacities are respected.  
The problem is to minimize the total cost, involving the facility installation cost (first stage) and the expected value of the transportation cost of clients to facilities (second stage).
We study a risk-adverse version of this problem, replacing the expected value of the second stage with a risk-measure known as the Conditional Value-at-Risk (CVaR). Given a level of risk-aversion $\sigma$, we search for an optimal subset of facilities to install so that their opening cost plus the expected value of the worst $\sigma$-quantile of the random distribution of the transportation cost is minimized. 

\paragraph{Mathematical formulation:} The FL-CVaR can be formulated as follows:
\begin{subequations}
\begin{align}
  \min_{x\in\{0,1\}^{|I|}}  \sum_{i\in I} f_i x_i &+ \textrm{CVaR}_\sigma\left[Q(x,\xi)\right] \label{eq:FL_objOrig}\\
    \sum_{i\in I} K_i x_{i} &\geq D \label{eq:LF_capacOrig}
\end{align}
\end{subequations}
where binary first-stage variables $x_i$ indicate if the facility $i\in I$ is installed or not. The only constraint to the problem ensures that there must be enough capacity installed to satisfy all the demand, where $D:=\max_{\xi\in\Omega} \{\sum_{i\in J} {d}^\xi_j\}$. Given a set of installed facilities $\hat{x}$, the second-stage problem minimizes the transportation cost between the clients and the facilities:
\begin{subequations}
\begin{align}
   Q(\hat{x},\xi):=  \min_{y\in \mathbb{R}_+^{|I|\times |J|}} &\ \sum_{i\in I} \sum_{j\in J} c_{ij} y_{ij} &\\
   \sum_{i\in I} y_{ij} &\geq d^\xi_{j} &  j\in J \label{eq:FL_demand}\\
   \sum_{j\in J} y_{ij} &\leq K_i \cdot \hat{x}_i &  i\in I. \label{eq:FL_capac}
\end{align}
\end{subequations}
Here, the second-stage variables $y_{ij}$ represent the units of demand of client $j\in J$ assigned to facility $i\in I$. Constraint~\eqref{eq:FL_demand} indicates that all the demand of client $j$ must be satisfied, and constraint~\eqref{eq:FL_capac} enforces that the demand assigned to a given facility $i\in I$ does not exceed its capacity $K_i$ (if the facility $i$ is installed) or zero (if the facility is not installed). In case of a discrete set of scenarios $\mathcal{S}$, the risk-aversion measure CVaR with a given risk-level $\sigma$ can be formulated as a linear program, see \citet{rockafellar2000optimization}. That is, we can replace the objective function of the  problem~\eqref{eq:FL_objOrig} by
\begin{equation}
      \min_{x\in\{0,1\}^{|I|}, \tau\in\mathbb{R}}  \sum_{i\in I} f_i x_i + \tau + \frac{1}{1-\sigma}\sum_{s\in\mathcal{S}} p^s Q'(x,\tau,\xi^s)
\end{equation}
where the modified second-stage problem for a given $s\in\mathcal{S}$ reads as follows:
\begin{subequations}
\label{subeq:FL}
\begin{align}
   Q'(\hat{x},\hat{\tau},\xi^s):=  \min_{y\in \mathbb{R}_+^{|I|\times |J|}, z\geq 0} z &&&\\
   \sum_{i\in I} \sum_{j\in J} c_{ij} y_{ij} &\leq z + \hat{\tau} &\\
   \sum_{i\in I} y_{ij} &\geq d^s_{j} &  j\in J \\
   \sum_{j\in J} y_{ij} &\leq K_i \cdot \hat{x}_i &  i\in I.
\end{align}
\end{subequations}
The dual of this subproblem is as follows:
\begin{subequations}
\begin{align}
    Q'(\hat{x},\hat{\tau},\xi^s):=  \max_{\beta\in\mathbb{R}_+^{|J|},\gamma\in\mathbb{R}_+^{|I|}}\ -\alpha \hat\tau  + \sum_{j\in J} \beta_j d^s_j &- \sum_{i\in I} \gamma_i K_i \hat{x}_i \\
    -\alpha c_{ij}  + \beta_j - \gamma_i &\leq0 & i\in I, j\in J \label{eq:dual}\\
    0\leq \alpha &\leq 1.&&&
\end{align} \label{eq:FLCVAR_dualsubproblem}
\end{subequations}

\paragraph{Benders cuts:} Note that, given~\eqref{eq:LF_capacOrig}, the subproblem \eqref{subeq:FL} is always feasible, for any $s\in\mathcal{S}$ and any feasible first-stage decision $\hat{x}$, so only Benders optimality cuts need to be constructed.

Given a first-stage solution ($\hat{x}$, $\hat{\tau}$) and a corresponding dual solution $(\alpha^s,\beta^s,\gamma^s)$ of $Q'(\hat{x},\hat{\tau},\xi^s)$ for each scenario $s\in\mathcal{S}$, the optimality cuts to be added to master problem \eqref{Benders_multicut} or \eqref{Benders_singlecut} are:
\begin{align}
     -\alpha^s \hat\tau  + \sum_{j\in J} \beta^s_j d^s_j - \sum_{i\in I} \gamma^s_i K_i \hat{x}_i \leq \theta^s  \quad s\in\mathcal{S}  \tag{FL-CVaR multi-cuts} \\
     -\left(\sum_{s\in\mathcal{S}} p^s \alpha^s\right) \hat\tau  + \sum_{j\in J}\sum_{s\in\mathcal{S}} p^s  \beta_j^s d^s_j - \sum_{i\in I} \left(\sum_{s\in\mathcal{S}} p^s \gamma^s_i\right) K_i \hat{x}_i \leq \Theta.  \tag{FL-CVaR single-cuts}
\end{align}

Given a partition $\mathcal{P}$ of $\mathcal{S}$, let $d^P_j := \sum_{s\in P} p^s d_j^s / p^P$ be the expected aggregated demand of customer $j\in J$ and let $(\alpha^P,\beta^P,\gamma^P)$ be an optimal dual solution of the subproblem with demand $d^P$. Hence, by Proposition~\ref{prop_adapt_valid_cut} the following cuts are valid for the FL-CVaR problem:
\begin{align}
     -p^P \left[\alpha^P \hat\tau  + \sum_{j\in J} \beta^P_j d^P_j - \sum_{i\in I} \gamma^P_i K_i \hat{x}_i \right] \leq \sum_{s\in P} p^s \theta^s  \quad P \in \mathcal{P}.  \tag{FL-CVaR adaptive-cuts}
\end{align}

\paragraph{Refinement procedure:} As before, we provide a corollary of Proposition~\ref{prop_optimal} for this problem, which guide us on how to apply a proper refinement of $\mathcal{P}$ for the FL-CVaR problem:
\begin{corollary}
Given a partition $\mathcal{P}$ of the set $\mathcal{S}$, the following condition guarantees that adding Benders adaptive-cuts for each $P\in\mathcal{P}$ is sufficient to obtain an optimal solution of the FL-CVaR problem: 
\begin{equation}
\sum_{j\in J} \left[ \left( \sum_{s\in P} \frac{p^s}{p^P} d_j^s\right) \cdot  \left( \sum_{s\in P} \frac{p^s}{p^P} \beta_j^s\right)\right] = \sum_{j\in J}  \sum_{s\in P} \frac{p^s}{p^P} d_j^s \beta_j^s, \qquad \text{for all }p\in\mathcal{P}.
\label{eq:FLCVAR_condition}
\end{equation}
\end{corollary}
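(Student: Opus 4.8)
The plan is to obtain this corollary as a direct specialization of Proposition~\ref{prop_optimal} to the FL-CVaR data, using the fact that the only stochastic ingredient of the subproblem~\eqref{subeq:FL} is the client demand vector $d^s=(d^s_j)_{j\in J}$. Since the subproblems~\eqref{subeq:FL} are always feasible thanks to constraint~\eqref{eq:LF_capacOrig}, the feasibility cuts~\eqref{eq:aggregatedBender_duplicated_feascut} in~\eqref{Benders_adaptive} are vacuous, so it suffices to verify that a partition $\mathcal{P}$ obeying~\eqref{eq:FLCVAR_condition} makes conditions~\eqref{eq:condition1} and~\eqref{eq:condition2} of Proposition~\ref{prop_optimal} hold at the relevant optimal dual solutions.

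First I would recast the CVaR reformulation into the abstract template: the role of the first-stage variable ``$x$'' of Proposition~\ref{prop_optimal} is played by the pair $(x,\tau)$, and the role of $\xi^s=(T^s,h^s)$ by the data of~\eqref{subeq:FL}. Reading off the constraints, the technology matrix is deterministic — the coefficient of $\tau$ in the coupling constraint and the capacities $K_i$ multiplying $x_i$ do not depend on $s$ — so $T^s\equiv T$. Consequently condition~\eqref{eq:condition2} holds trivially for every $P\in\mathcal{P}$: pulling $T$ out of the sums, both sides of~\eqref{eq:condition2} collapse to $\left(\sum_{s\in P}p^s\right)(T(x,\tau))^\intercal\!\left(\sum_{s\in P}p^s\hat\lambda^s\right)$. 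Hence only~\eqref{eq:condition1} remains to be imposed.

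Next I would make~\eqref{eq:condition1} explicit. The vector $h^s$ has zero entries everywhere except in the demand block, where it equals $d^s_j$; the dual vector of~\eqref{eq:FLCVAR_dualsubproblem} is $\hat\lambda^s=(\alpha^s,\beta^s,\gamma^s)$, whose entries conjugate to the demand block are exactly $\beta^s_j$. Therefore ${h^s}^\intercal\hat\lambda^s=\sum_{j\in J}d^s_j\beta^s_j$, and~\eqref{eq:condition1} becomes
\[
\left(\sum_{s\in P}p^s\right)\sum_{s\in P}p^s\sum_{j\in J}d^s_j\beta^s_j
=\sum_{j\in J}\Bigl(\sum_{s\in P}p^s d^s_j\Bigr)\Bigl(\sum_{s\in P}p^s\beta^s_j\Bigr).
\]
Dividing both sides by $(p^P)^2=\bigl(\sum_{s\in P}p^s\bigr)^2$ turns this identity into precisely~\eqref{eq:FLCVAR_condition}. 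Thus any $\mathcal{P}$ satisfying~\eqref{eq:FLCVAR_condition} satisfies both~\eqref{eq:condition1} and~\eqref{eq:condition2}, and Proposition~\ref{prop_optimal} applies verbatim, giving that an optimal solution of~\eqref{Benders_adaptive} assembled from the FL-CVaR adaptive-cuts is optimal for the FL-CVaR problem.

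The only delicate point — and the main obstacle I anticipate — is the bookkeeping of where $\tau$ sits in the abstract model: it must be treated as an extra first-stage coordinate that enters the \emph{deterministic} technology matrix through the coupling constraint, not $h^s$, and one must check the sign conventions of the dual~\eqref{eq:FLCVAR_dualsubproblem} against the template $(h^s-T^sx)^\intercal\lambda$ so that the $\tau$- and $K_i$-terms fall under the (trivially satisfied) condition~\eqref{eq:condition2} while only the $\beta$–$d^s$ interaction is left to~\eqref{eq:condition1}. Once this is pinned down, the remaining algebra is the routine division displayed above.
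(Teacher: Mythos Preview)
Your proposal is correct and follows exactly the approach the paper intends: the corollary is stated as a direct specialization of Proposition~\ref{prop_optimal}, and your argument---observing that $T^s$ is deterministic (so~\eqref{eq:condition2} is automatic) and that only the $d^s_j$--$\beta^s_j$ pairing survives in~\eqref{eq:condition1}, then dividing by $(p^P)^2$---is precisely the unpacking this requires. The paper itself does not spell out these details, so your write-up is in fact more explicit than the original.
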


Therefore, sufficient conditions for a given partition $\mathcal{P}$ to satisfy this condition are, for example, when either $d^s=d^{s'}$ or $\beta^s=\beta^{s'}$ for all pair of scenarios $s,s'\in P$. So, for each $P\in\mathcal{P}$ we refine the partition by solving the subproblem~\eqref{eq:FLCVAR_dualsubproblem} for each $s\in P$ and group them into subsets where $\beta^s$ have the same value.

\section{Computational Experiments}\label{sec:computational}

In this section we study  computational performance of the proposed Benders adaptive-cuts method, for which we consider two strategies:
\begin{itemize}
    \item \textsc{Adaptive}: the Benders adaptive-cuts method described in Algorithm \ref{alg:gen_benders}, and 
    \item \textsc{Adaptive-Single}:  the Benders adaptive-cuts method described in Section \ref{sec:adaptive-single} in which we adapt Algorithm~\ref{alg:gen_benders} by replacing the optimality adaptive-cuts (Step \ref{alg:seq1_optcut}) by
\[ \sum_{P\in\mathcal{P}^{(t)}} p^P (h^P-T^P x)^\intercal \hat{\lambda}^P \leq \underbrace{\sum_{s\in \mathcal{S}} p^s\theta^s}_\Theta\] 
and by including this cut only when all aggregated subproblems (for all $P \in \mathcal{P}$) are feasible. Additionally,  we include the standard single cut $\sum_{s\in\mathcal{S}} p^s (h^s-T^s x)^\intercal \hat{\lambda}^s \leq \Theta$ into the master problem each time that we refine the partition $\mathcal{P}^{(t)}$ (Step \ref{alg:disggregation_step}). 
    \end{itemize}
Our computational study is conducted on three stochastic network flow problems presented in Section~\ref{sec:BendersStochNetworks}. We compare the Benders adaptive-cuts method against:
\begin{itemize}
    \item \textsc{Single}: the single-cuts Benders implementation described in  Section~\ref{sec:bendersClassic}, 
    \item \textsc{Multi}: the multi-cuts Benders implementation described in  Section~\ref{sec:bendersClassic},  
    \item  \textsc{DE}: the deterministic equivalent problem reformulation given by \eqref{eq:DE}, solved directly using an off-the-shelf solver, and \item \textsc{GAPM}: the GAPM proposed by~\citet{ramirez2021generalized} where each iteration is solved using an off-the-shelf solver of the compact problem reformulation given by \eqref{eq:DE_P}. 
\end{itemize} 
 
\paragraph{Algorithmic Considerations:} None of the aforementioned algorithms is enhanced using valid inequalities, warm starts, stabilization or additional performance improvement strategies for decomposition algorithms. 
Besides, in all study cases the set of first-stage variables $\mathcal{X}$ is a polytope. That way, the results are useful to identify the advantages and disadvantages of the three \textsc{Adaptive} strategies, and measure the pure impact of the new methodology. 
Furthermore, the refinement of a partition $\mathcal{P}$ (Line~\ref{alg:disggregation_step} from Algorithm~\ref{alg:gen_benders}) is adapted for each problem using ideas discussed in Section~\ref{sec:BendersStochNetworks}.

Finally, the maximum running time allowed for each of the above algorithms is set to 86,400 seconds (24 hours).
All the methods were implemented using the Python programming language and using Gurobi v9.0.2 as off-the-shelf optimization solver with its default parameters. The codes and instances are available at \url{https://github.com/borelian/AdaptiveBenders}. All runs use four threads and 32 GB of RAM.  The computers employed for experiments use CentOS Linux v7.6.1810 on x86\_64 architecture, with four eight-core Intel Xeon E5-2670 processors and 128 GB of RAM.

\subsection{Results for the CPP}
\paragraph{Dataset.}
Our benchmark set for the CPP consists of the \emph{electricity planning} instances available at \url{https://people.orie.cornell.edu/huseyin/research/sp_datasets/sp_datasets.html}. They have $|V_L|=20$ source nodes and $|V_R|=50$ demand nodes. Furthermore, there are 10 resource constraints that limit the first-stage decisions.
The stochastic demand is a discrete probability distribution that is pairwise independent for each demand node, with a total of more than $10^{42}$ possible scenarios. For this experiment, we were able to test instances with the following numbers of sampled scenarios: 1,000, 10,000, 100,000, and 1,000,000.

\begin{figure}
\includegraphics[width=0.8\textwidth]{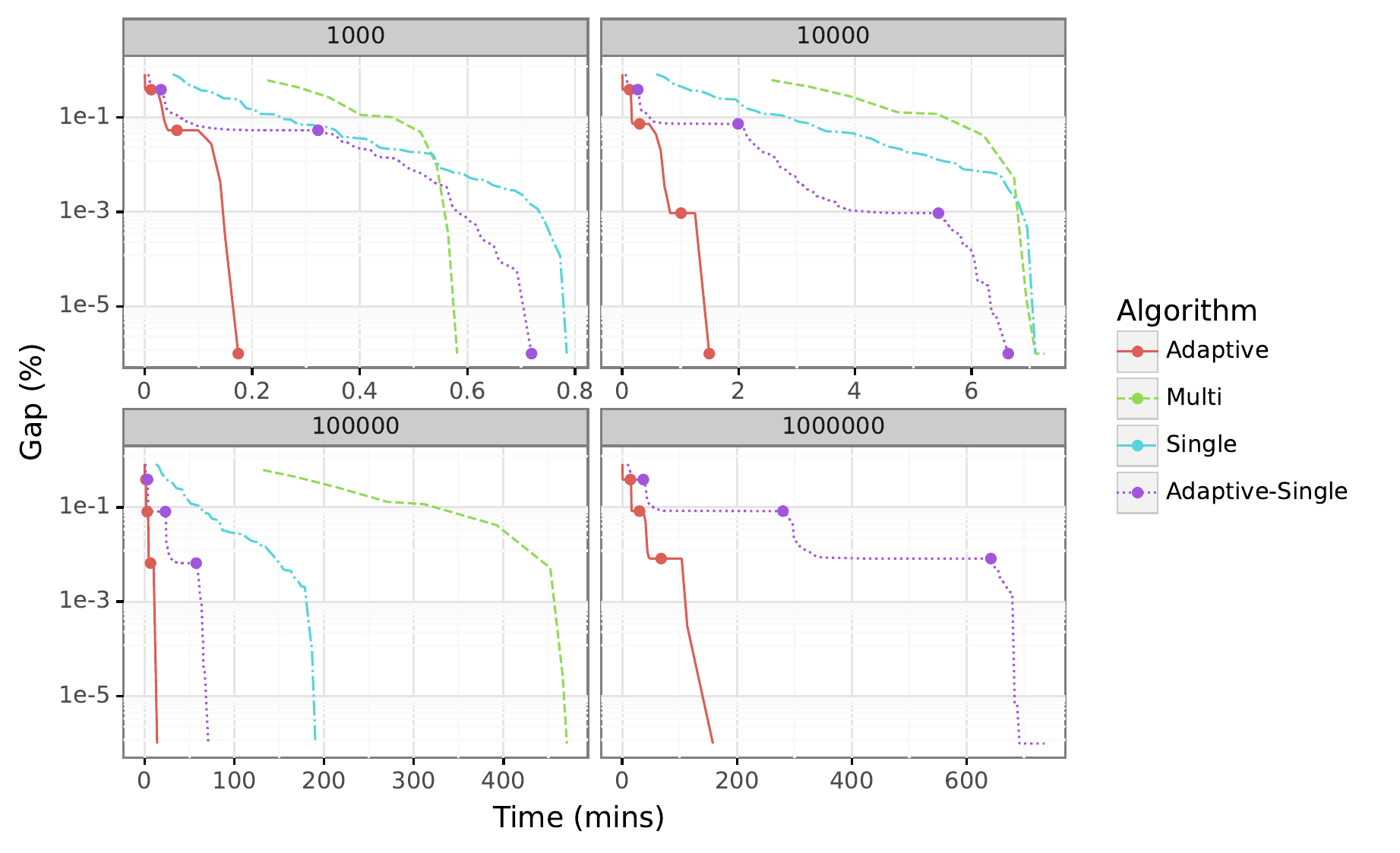}
\caption{Gap Over Time for Different Numbers of Scenarios}\label{fig:gaptime_scenarios}
\end{figure}

\paragraph{Results.}
To analyze the obtained results, we plot the percentage gap over time of the global lower bound $z_L$ obtained through the execution of our algorithm, i.e., $gap=(OPT - z_L)/|OPT|$, where
$OPT$ refers to the optimal objective value for each instance. 
Figure \ref{fig:gaptime_scenarios} shows the gap (in log-scale) over time for the different implementations (in different line-styles and colors). In the case of the \textsc{Adaptive} and \textsc{Adaptive-Single}, we also plot a dot when a refinement of the partition is done.
This figure shows that \textsc{Adaptive} not only outperforms the other methods, but also obtains better initial solutions in the early iterations. This is particularly evident for the largest instances, i.e., those with 100,000 and 1,000,000 samples, where the optimal solution is obtained much faster by \textsc{Adaptive}. 
Comparing the standard Benders' strategies, \textsc{Multi} is faster than \textsc{Single} when $\mathcal{S}=$1,000, but this performance is reverted when the number of scenarios increases (which is due to the master problem being overloaded with a large number of cuts separated by \textsc{Multi}). Nevertheless, neither \textsc{Multi} nor \textsc{Single} is able to deal with 1,000,000 scenarios. Not even 
the initial iteration 
can be solved by these two methods, due to the large time required to solve the 1,000,000 subproblems (recall that these two methods require  all $|\mathcal{S}|$ subproblems to be solved in order to derive Benders cuts). On the contrary, \textsc{Adaptive} and \textsc{Adaptive-Single} are able to find the optimal solution because they require to solve only $|\mathcal{P}|$ subproblems in each separation phase. Given that \textsc{Adaptive} can be interpreted as a multi-cuts implementation with respect to the given partition $\mathcal{P}$, it is not surprising that its performance is superior to that of \textsc{Adaptive-Single}. Indeed, this can be explained by two factors: (a) in each iteration stronger lower bounds are obtained due to the generation of multiple cuts, and (b) the size of $\mathcal{P}$ remains small in the final iteration, so that the master problem is not overloaded with a large number of cuts (which is sometimes a disadvantage of the multi-cuts implementation, when applied to the whole set $\mathcal{S}$, see above).




\begin{figure}
\subfloat[Number of cuts added over time\label{fig:3a}]{\includegraphics[width=0.523\textwidth]{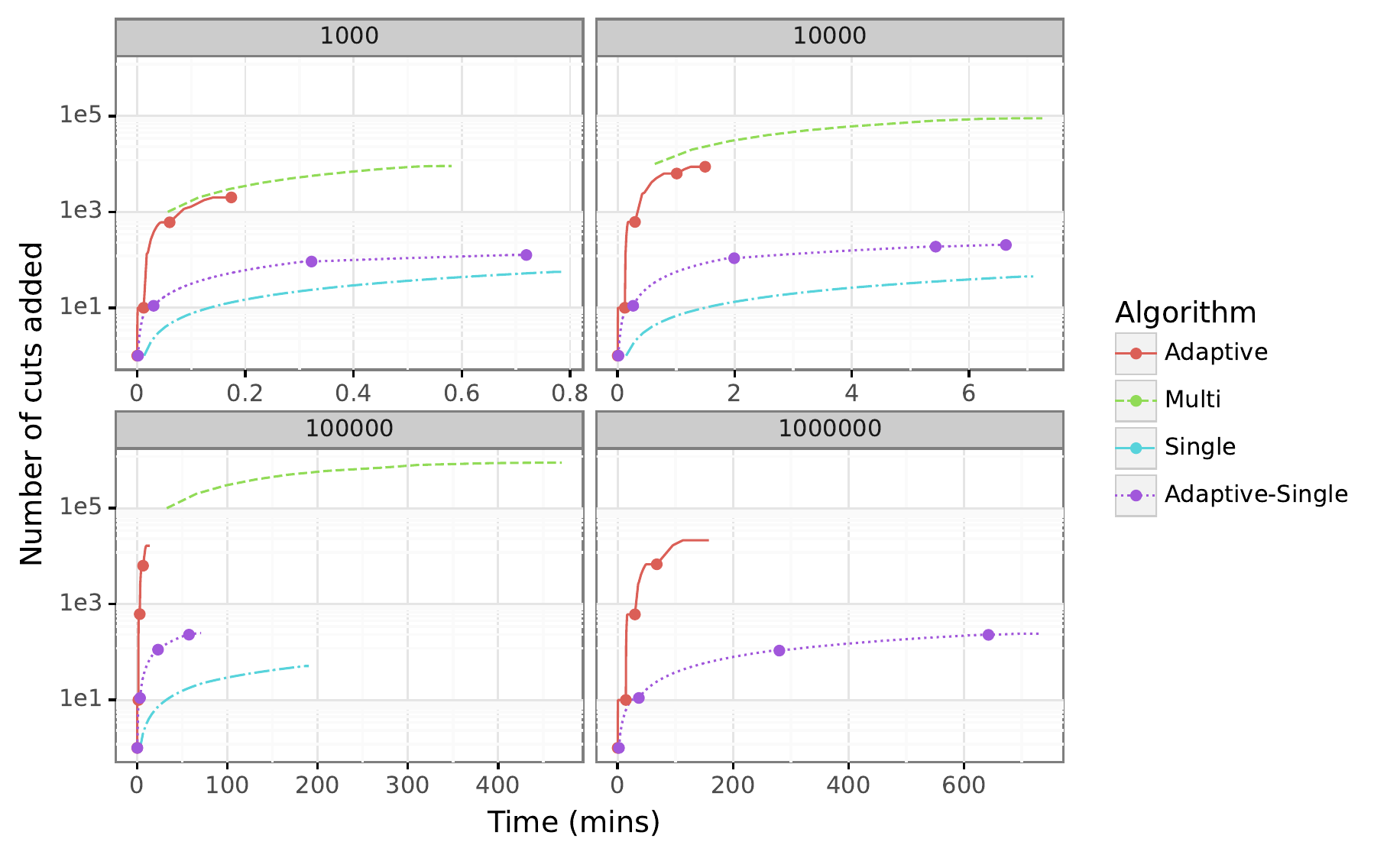}}\hfill
\subfloat[Partition size over time - \textsc{Adaptive}\label{fig:3b}]{\includegraphics[width=0.477\textwidth]{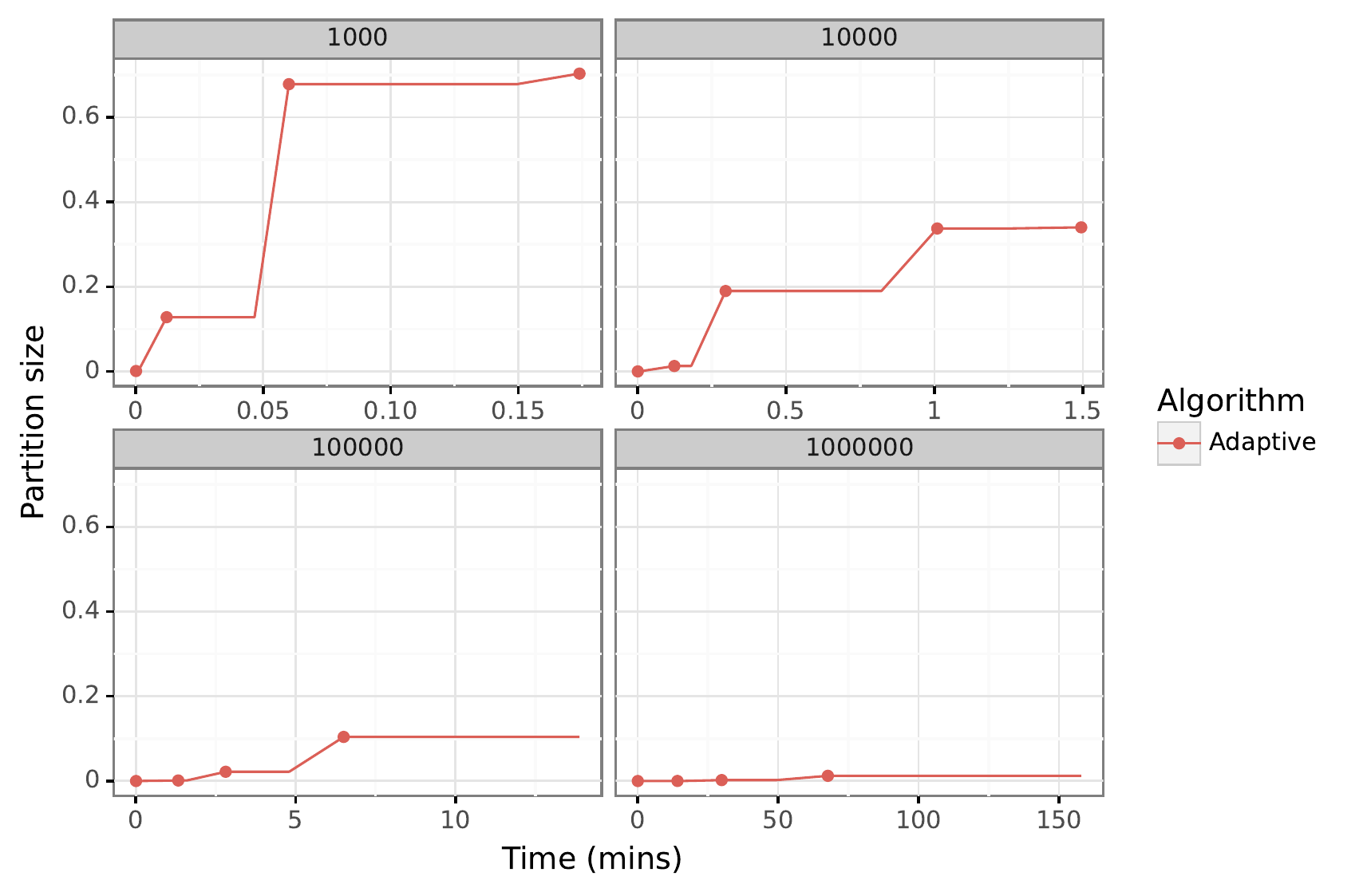}}
\caption{Number of Cuts and Partition Size Over Time for Different Numbers of Scenarios} \label{fig:CP_cuts_part}
\end{figure}

As a confirmation of these statements, we have also summarized the outcomes concerning the number of added cuts and the growth of the partition size in Figure \ref{fig:CP_cuts_part}. In Figure \ref{fig:3a}, we present the number of inserted optimality cuts over time, in log-scale. The superior performance of both \textsc{Adaptive} and \textsc{Adaptive-Single}  on very large instances is  tied  to three factors: (1) their ability to generate violated optimality cuts faster than other methods (due to the aggregation of scenarios, which allows  smaller number of subproblems to be solved, particularly in the initial iterations), (2) the fewer number of cuts required to reach the optimal solution, and (3) less subproblems to be solved even in the last iterations, where usually $|\mathcal{P}|$  remains much smaller than $|\mathcal{S}|$. By contrast, \textsc{Multi} requires more cuts, particularly for the largest instances, and \textsc{Single} adds fewer cuts but has a much slower convergence rate, requiring a longer time than \textsc{Adaptive} to reach optimality (but shorter than \textsc{Multi} for the largest instances). 
In Figure \ref{fig:3b},  the $y$-axis shows, as a percentage, the relative size of the partition ($|\mathcal{P}|/|\mathcal{S}|$) over time. This plot shows that with 1,000 scenarios, the final partition has a size of $70\%$ of $|\mathcal{S}|$, while for the remaining instances the relative size of the final  partition  keeps decreasing. For the largest instances, the final partition contains $\approx$ 12,000 scenarios, which is nearly 1.2\% of the total number of scenarios.  Hence, these instances demonstrate the 
great ability of \textsc{Adaptive} to keep small partitions over time, being able to find optimal solutions for instances  with a huge number of scenarios that remain out of reach for standard implementations of Benders method. 

\paragraph{Comparison with other methods.}
Finally, we compare the performance of Benders adaptive-cuts with that of the deterministic equivalent (\textsc{DE}) and the standard Generalized Adaptive-Partition Method (\textsc{GAPM}). 

\begin{table}
	\caption{Solving Time Required by the Different Methods for the CPP}\label{tab:comparisonGAPM2}%
	\begin{small}
      \begin{tabular}{r|rrrrrr}
            \hline
   $|\mathcal{S}|$ & \textsc{Adaptive} & \textsc{Adaptive-Single} & \textsc{Multi} & \textsc{Single} & \textsc{GAPM} & \textsc{DE}  \\ \hline
   10 & 0.29 & 0.54 & 0.30 & 1.04 & 0.57 & 0.26 \\
   100 & 2.42 & 7.00 & 3.22 & 4.75 & 6.73 & 2.41\\
   1,000 &   10.4 &    43.1  &    34.9 &    47.1 &  20.1 &   30.0 \\
  10,000 &   89.7 &   397.9  &   435.2 &   426.0 & 172.6 &  233.7 \\
 100,000 &  832.5 &  4253.9  & 28238.7 & 11416.4 & 685.5 & 2180.1 \\
1,000,000 & 9475.4 & 44126.6  &   --   & --    & 4710.3 & --\\\hline
\end{tabular} \end{small}
\end{table}

Table~\ref{tab:comparisonGAPM2} presents the solving times (in seconds) required by each algorithm. 
All algorithms perform  similarly for smaller instances. We notice that \textsc{Adaptive} outperforms the other Benders strategies for all instances. In addition, \textsc{Adaptive} has similar solving times with the deterministic equivalent formulation when the number of scenarios is smaller than 100. 
However, for larger instances, solving the DE becomes difficult, so that the largest instance cannot be solved within 24h. We also found that for the CPP the GAPM performs very well on large instances, because GAPM exploits the ability to keep small partitions over time, being faster than \textsc{Adaptive} on the largest ones. 

To show the relevance of considering a large number of scenarios for this particular problem, Table~\ref{tab:solutions} presents the optimal solution found for each problem instance obtained with a different number of scenarios. We also show the obtained objective values, and a 95\% confidence interval of the true objective value for the found solutions. It can be seen that considering less than 10,000 scenarios leads to a different optimal solution in each case, which in fact are suboptimal compared to the solution found with more than 10,000 scenarios. Also, notice that the reported objective values of the problem differ considerably from the true objective value of the solution, and that at least 10,000 scenarios are needed in order to have a reported objective value inside the 95\% confidence interval.

\begin{table}
\caption{Solutions Found for the CPP \label{tab:solutions}}%
      \begin{tabular}{r|c|c|c}
            \hline
 \multirow{2}{*}{$|\mathcal{S}|$}   & Optimal solution & Reported  & True objective  \\
   & [$x_1,x_6,x_8,x_{10},x_{11},x_{13},x_{19}$] & objective & (95\% confidence) \\ \hline
     10 & [4,7,5,5,6,10,2] & -1527.96 & -1396.72 $\pm$ 0.80 \\
    100 & [3,6,4,4,5,11,3] & -1446.82 & -1434.80 $\pm$ 0.70 \\
   1,000 & [4,6,4,4,5,11,3] & -1447.35 & -1438.23 $\pm$ 0.72 \\
  10,000 & [4,6,4,3,5,11,3] & -1438.45 & -1438.53 $\pm$ 0.71 \\
 100,000 & [4,6,4,3,5,11,3] & -1438.12 & -1438.53 $\pm$ 0.71 \\
1,000,000 & [4,6,4,3,5,11,3] & -1438.72 & -1438.53 $\pm$ 0.71 
        \end{tabular}
\end{table}

\subsection{Results for the SMCF}
\paragraph{Dataset.}
We tested the algorithm on the linear version of {Canad instances \textbf{R}} from~\citet{crainic2001bundle} available at  \url{http://groups.di.unipi.it/optimize/Data/MMCF.html#Canad}, but with random demands generated as in~\cite{rahmaniani2018accelerating}. The instances originally proposed by~\cite{rahmaniani2018accelerating} contain up to 100 scenarios. Since our goal was to test the performance of the new method on much larger sets of scenarios, we also generated additional instances, following the same instance generation procedure. 
We 
focus on instances with $|V|=10$ and $|E|=60$, and with the number of commodities $|\mathcal{K}| \in \{ 10, 25, 50\}$, referred to as R04 (small), R05 (medium) and R06 (large), respectively.
Concerning the fixed costs and capacities, we focus on 5 different configurations from this dataset, denoted as $l1,l3,l5,l7,l9$, wherein the the values of $f$ and $u$ are uniform over all arcs and are given as $(f_{ij},u_{ij}) \in \{ (1,1), (10,1),(5,2), (1,8), (10,8)\}$, respectively. 
Starting from a 
 deterministic instance from this dataset, we generated 30 stochastic instances with random demands: five different levels of linear correlation ($\{0\%,20\%,40\%,60\%,80\%\}$) between the commodities are considered, and for each of these levels, seven instances are generated with  $|\mathcal{S}| \in \{ $16; 100; 1,000; 5,000; 10,000; 20,000; 50,000$\}$. 
Overall, we obtained a dataset with a total of 525 instances to be solved.

\paragraph{Results.} 
Following the results obtained for the CPP, and after some preliminary experiments, we decided to exclude 
\textsc{Adaptive-Single} from the computational comparison on the SMCF. 
Our preliminary experiments have shown that the performance of \textsc{Adaptive-Single} is inferior to alternative implementations of Benders method studied in this paper.

\begin{figure}
\includegraphics[width=0.9\textwidth]{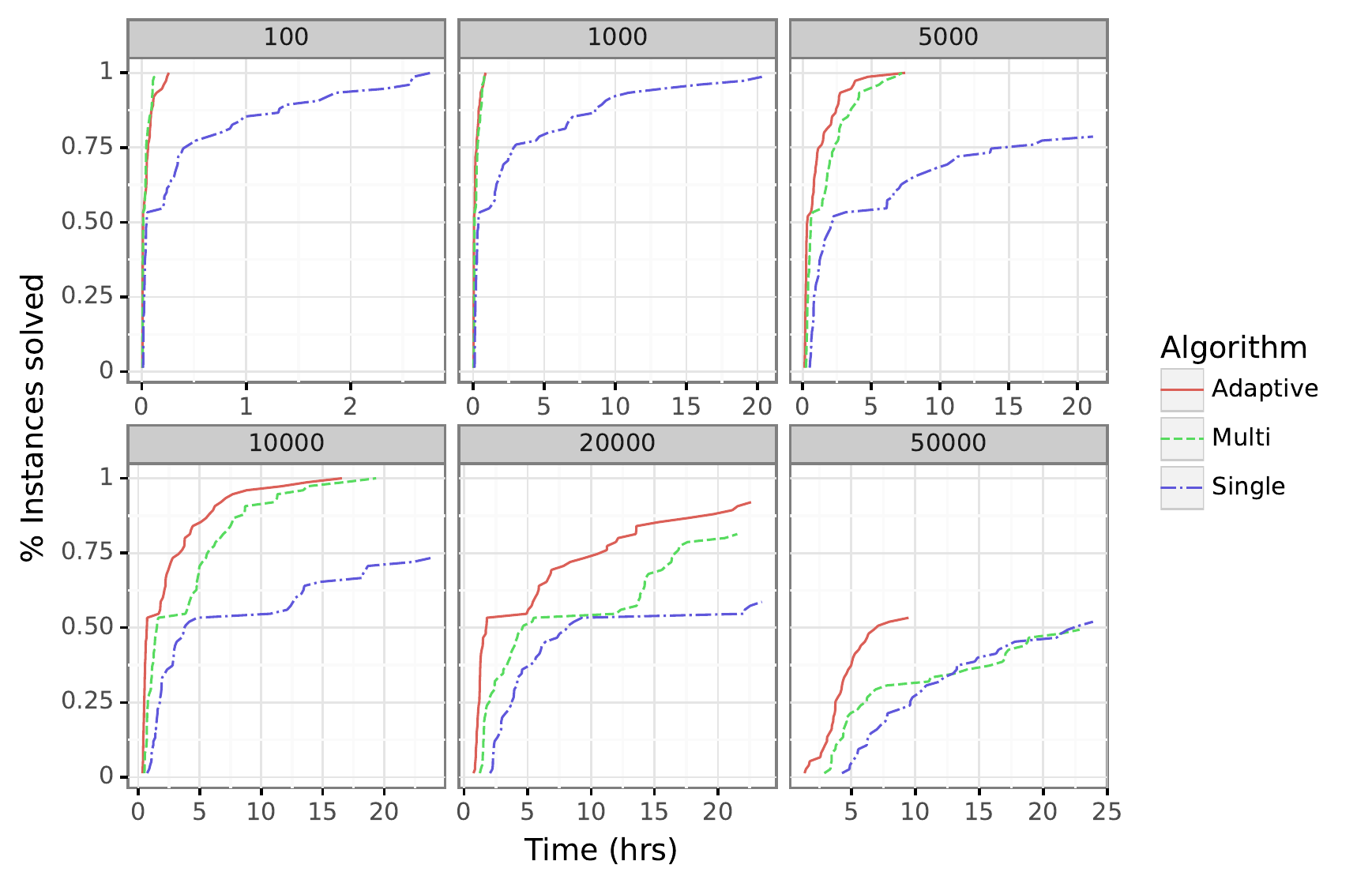}
\caption{Cumulative Performance Charts for Different Number of Scenarios}\label{fig:pprofiles}
\end{figure}

Due to the large number of instances, we first present the cumulative performance charts of solving all these instances with the different Benders strategies aggregated by the  number of scenarios. In Figure~\ref{fig:pprofiles},  each line shows the percentage of the 75 instances (per group, for a given size of $|\mathcal{S}|$, for more than 16 scenarios) solved to optimality over time for different number of scenarios. Each line corresponds to a different  Benders method, namely  \textsc{Single} in dot-dashed blue, \textsc{Multi} in dashed green and \textsc{Adaptive} in continuous red. It can be seen  that \textsc{Single} is consistently outperformed by the other methods (independently on the number of scenarios), and it is able to solve all instances only for 1,000 scenarios or less. Also, \textsc{Adaptive} clearly outperforms \textsc{Multi} when the number of scenarios is larger than 1,000, and the computational advantage of \textsc{Adaptive} is more pronounced with the increasing  number of scenarios. For 20,000 and 50,000 scenarios, \textsc{Adaptive} also solves more instances then the other two Bender's methods.

\begin{table}
	\caption{Average Solving Times, Iterations and Number of Cuts  of Each Method for Different Instances and Numbers of Scenarios}\label{tab:SFCMFP_cuts_it}%
{      \begin{adjustbox}{width=\textwidth,center}              \begin{tabular}{cr|rrrrrr|rrrrrr|rrrrrr}
            \hline
            &\multicolumn{1}{c}{}& \multicolumn{6}{c}{\textsc{Adaptive}} & \multicolumn{6}{c}{\textsc{Multi}} & \multicolumn{6}{c}{\textsc{Single}} \\
            \hline
            Inst & $|\mathcal{S}|$ & \#Ref & \#In & Time & Iter & FC & OC & \#In &  \multicolumn{2}
            {c}{Time} & Iter & FC & OC &  \#In & \multicolumn{2}
            {c}{Time}   & Iter & FC & OC \\\hline
            \multirow{6}{*}{R04} 
& 16 & 2.6 & 25 & 17.1 & 743.6 & 695 & 617 & 25 & 12.8 & (0.74) & 83.9 & 710 & 484 & 25 & 35.5 & (2.00) & 462.7 & 933 & 257\\
 & 100 & 3.2 & 25 & 28.4 & 715.3 & 872 & 1648 & 25 & 33.2 & (1.15) & 50.0 & 2283 & 1712 & 25 & 114.4 & (3.85) & 428.8 & 2608 & 235\\
 & 1000 & 3.3 & 25 & 153.3 & 752.2 & 2245 & 9997 & 25 & 264.7 & (1.70) & 38.8 & 16689 & 14410 & 25 & 911.1 & (5.66) & 385.8 & 16752 & 225\\
 & 5000 & 3.4 & 25 & 748.6 & 771.4 & 6384 & 41529 & 25 & 1540.9 & (2.00) & 35.6 & 74709 & 68595 & 25 & 4959.8 & (6.20) & 395.6 & 75877 & 222\\
 & 10000 & 3.5 & 25 & 1797.2 & 737.3 & 10309 & 79371 & 25 & 3849.2 & (2.04) & 34.2 & 140723 & 136964 & 25 & 8955.3 & (4.62) & 354.2 & 138664 & 205\\
 & 20000 & 3.6 & 25 & 4538.1 & 763.2 & 17703 & 149836 & 25 & 11490.8 & (2.39) & 35.2 & 295254 & 265130 & 25 & 19041.4 & (3.98) & 353.4 & 290202 & 206\\
 & 50000 & 3.8 & 25 & 18593.0 & 755.6 & 35164 & 333360 & 22 & 45655 & (2.21) & 30.1 & 634899 & 611159 & 24 & 49600.9 & (2.50) & 335.4 & 652731 & 203\\\hline
            \multirow{6}{*}{R05} 
& 16 & 2.3 & 25 & 66.6 & 1850.8 & 1608 & 1067 & 25 &37.1 & (0.59) & 145.7 & 1172 & 800 & 25 & 201.2 & (2.59) & 1318.3 & 2188 & 489\\
& 100 & 3.0 & 25 & 102.4 & 1722.9 & 1754 & 2492 & 25 & 89.5 & (0.95) & 85.8 & 3292 & 2534 & 25 & 881.2 & (5.01) & 1476.4 & 5079 & 462\\ 
 & 1000 & 3.0 & 25 & 407.7 & 1636.2 & 3113 & 13017 & 25 & 623.0 & (1.43) & 54.9 & 22406 & 19391 & 25 & 6212.7 & (7.85) & 1168.3 & 25441 & 392\\
 & 5000 & 3.0 & 25 & 2319.0 & 1680.7 & 7859 & 57436 & 25 & 4374.9 & (1.63) & 54.0 & 102603 & 94304 & 24 & 25334.5 & (6.44) & 908.4 & 102228 & 318\\
 & 10000 & 3.1 & 25 & 5583.0 & 1531.2 & 11908 & 106931 & 25 & 11375.8 & (1.79) & 50.8 & 196130 & 188977 & 20 & 33894.0 & (4.50) & 563.4 & 192746 & 252\\
 & 20000 & 3.1 & 25 & 16563.1 & 1537.8 & 20488 & 208087 & 25 & 34261.1 & (1.80) & 49.4 & 364854 & 375155 & 11 & 17073.4 & (2.52) & 126.8 & 256809 & 65\\
 & 50000 & 2.3 & 10 & 12351.0 & 376.2 & 27211 & 223703 & 10 & 14711 & (1.18) & 23.7 & 538463 & 284365 & 10 & 28116.5 & (2.19) & 73.4 & 525782 & 33\\\hline
\multirow{6}{*}{R06} 
& 16 & 2.3 & 25 & 288.3 & 4327.5 & 3809 & 1752 & 25 & 124.1 & (0.48) & 234.1 & 2258 & 1261 & 25 & 1105.4 & (2.70) & 2730.4 & 4825 & 1083\\
 & 100 & 3.0 & 25 & 357.3 & 3882.0 & 3631 & 3842 & 25 & 240.9 & (0.78) & 115.2 & 4647 & 4210 & 25 & 3878.1 & (6.38) & 2441.2 & 7136 & 702\\
 & 1000 & 3.1 & 25 & 1385.4 & 3912.1 & 5344 & 22135 & 25 & 1610.1 & (1.25) & 73.2 & 29781 & 30440 & 24 & 23759.1 & (9.34) & 1613.3 & 30935 & 571\\
 & 5000 & 3.2 & 25 & 7922.9 & 3745.3 & 10027 & 95988 & 25 & 11396.0 & (1.57) & 71.1 & 127143 & 145203 & 10 & 13607.8 & (3.16) & 286.3 & 85420 & 89\\
 & 10000 & 3.3 & 25 & 19306.2 & 3956.2 & 15448 & 175679 & 25 & 27714.5 & (1.53) & 63.8 & 212647 & 279003 & 10 & 24954.2 & (2.76) & 266.1 & 151486 & 95\\
 & 20000 & 2.9 & 19 & 40475.0 & 2980.1 & 21944 & 292090 & 10 & 32340.8 & (1.54) & 41.6 & 322966 & 290521 & 8 & 36365.0 & (2.69) & 184.6 & 293469 & 69\\
 & 50000 & 1.5 & 5 & 6357.0 & 131.6 & 5566 & 37159 & 5 & 15964 & (2.52) & 23.4 & 635192 & 163153 & 5 & 22115.6 & (3.53) & 42.4 & 572869 & 14\\\hline
        \end{tabular}\end{adjustbox}}
     
\end{table}

Table \ref{tab:SFCMFP_cuts_it} summarizes some important aspects, including the number of instances solved to optimality (\#In), the average solving time required (Time) in minutes, the average number of iterations (Iter), the average number of feasibility (FC) and optimality (OC) cuts added. 
We present these values for the three considered Benders methods and for different instance sizes and numbers of scenarios, with a total of 25 instances per row.
We also report the relative slow-down of  \textsc{Multi} and \textsc{Single}, respectively, compared to \textsc{Adaptive}. We divide the time required by the former ones with the time required by  \textsc{Adaptive}  for the same instance, and report the geometric mean over 25 instances. This factor is shown between parenthesis in the column Time.
Finally, for \textsc{Adaptive}, the table also shows the number of refinements of the partitions (\#Ref). 

We start by comparing the \textsc{Single} vs the \textsc{Multi} implementation. The \textsc{Single} approach requires more iterations, with a similar number of feasibility cuts and (as expected) a considerably smaller number of optimality cuts added. However, this reduction in the number of cuts does not help to solve more problems faster. In fact, \textsc{Single} can only solve all 25  instances  per group when considering a small number of scenarios. On the contrary, \textsc{Multi} adds a large number of cuts per iteration, but solves all instances per group with up to 10,000 scenarios, and is overall faster than \textsc{Single}. 

When we analyze these values for \textsc{Adaptive}, we can see that \textsc{Multi} solves a similar number of instances, with shorter solving times for 16 and 100 scenarios, but it is between 1.2 to 2.4 times slower than \textsc{Adaptive} for larger number of scenarios. Note that \textsc{Adaptive}
requires the most iterations, however each iteration of the master is solved faster, which is explained by the number of cuts added to the master problem. In fact, \textsc{Adaptive} requires considerably fewer feasibility cuts than do the other methods  because feasibility cuts are 
added for the aggregated scenarios, not for each single scenario (see Section \ref{sec:implementation} for more details). 
Also the number of optimality cuts added by \textsc{Adaptive} is smaller than that added for \textsc{Multi}. Overall, the average number of cuts generated per iteration of \textsc{Adaptive} is considerably smaller than the respective number of \textsc{Multi} and \textsc{Single}. Indeed, for  \textsc{Adaptive}, less than 900 OC and FC are generated per iteration, regardless of the instance size (R04, R05, R06). However, the average number of cuts per iteration of \textsc{Multi} and \textsc{Single} can be as high as $\sim$47,000 and $\sim$22,000, respectively. This trade-off between more iterations but fewer cuts per iteration explains the computational advantage of   \textsc{Adaptive}, which enables more instances to be solved within the given time limit. 
Note also that for \textsc{Adaptive} the number of performed refinements (column \#Ref) is fairly small, even for a large number of scenarios. This finding also explains the good performance of \textsc{Adaptive} as this is the only step where the algorithm is required to solve all $|\mathcal{S}|$ subproblems, in contrast to the other two methods wherein the total number of subproblems must be solved in each iteration.

\begin{figure}
\includegraphics[width=0.8\textwidth]{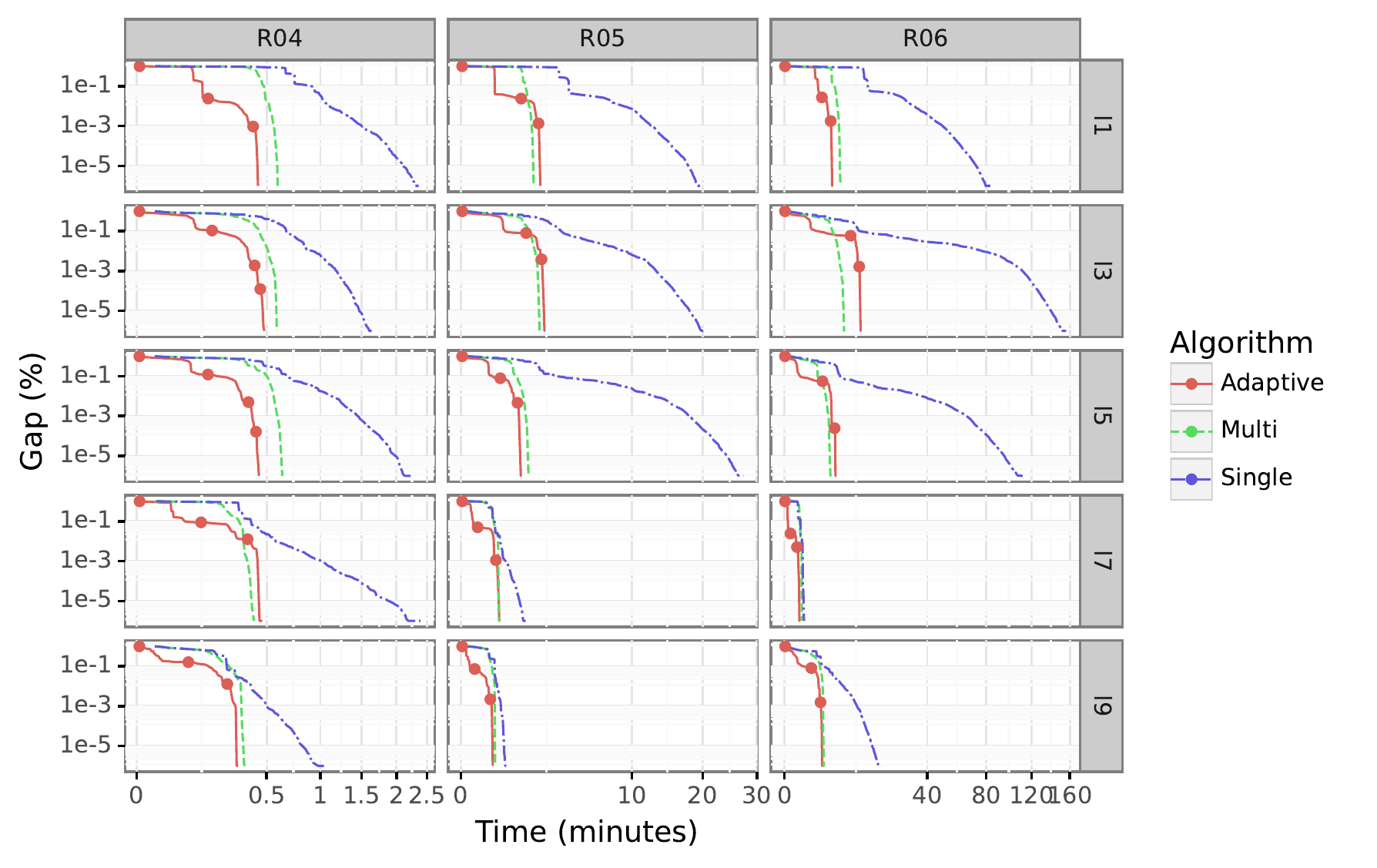}
\caption{Gap Over Time for Different Configurations (100 scenarios)}\label{fig:1_gaptime100}
\end{figure}

\begin{figure}
\includegraphics[width=0.8\textwidth]{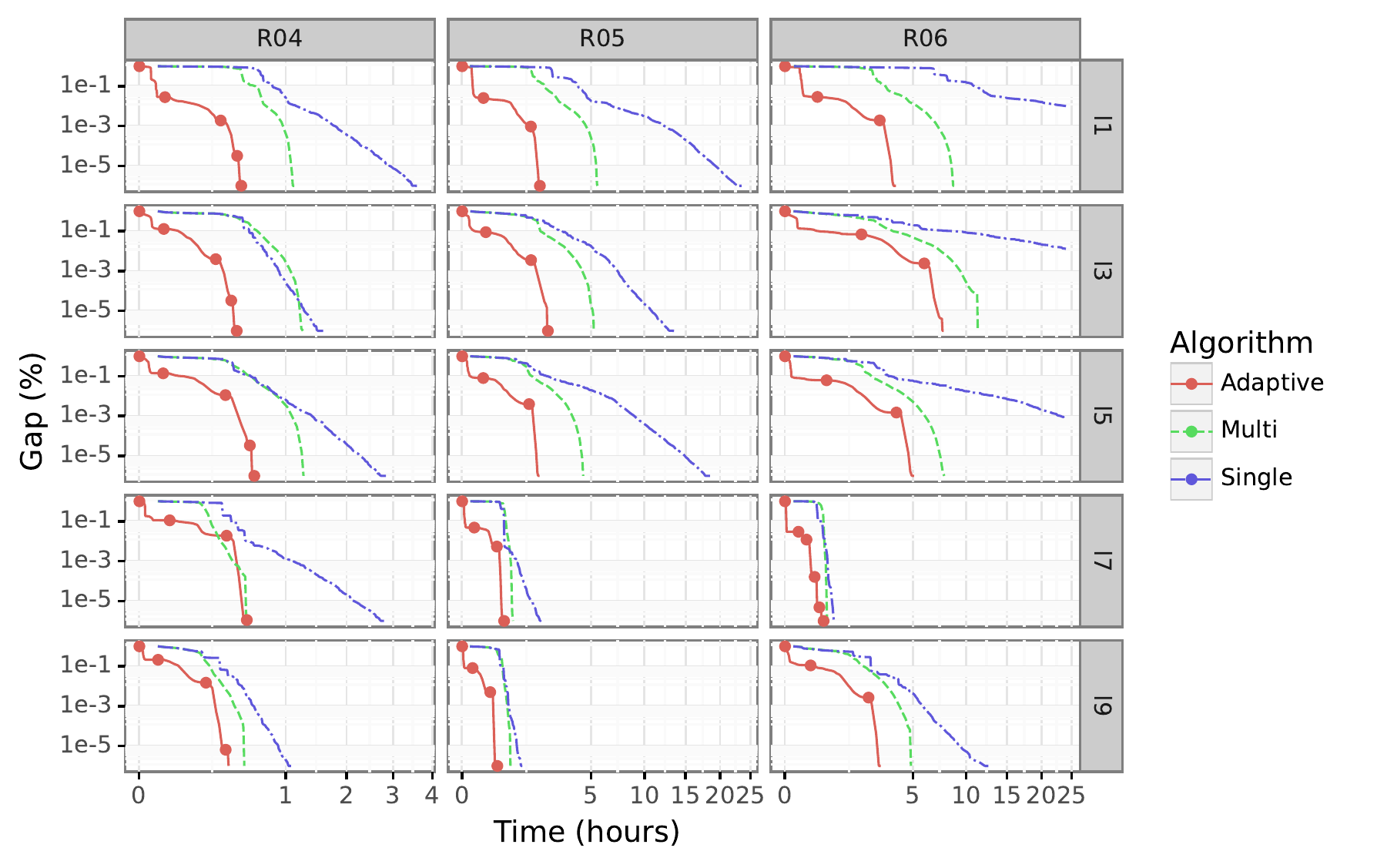}
\caption{Gap Over Time for Different Configurations (10,000 scenarios)}\label{fig:1_gaptime10000}
\end{figure}

Figures~\ref{fig:1_gaptime100} and~\ref{fig:1_gaptime10000} show the evolution of the gap over time (in log-scale) for the three methods.   For simplicity, we present only those cases with no correlation and using 100 scenarios and 10,000 scenarios (the results are  similar to the remaining configurations). 
For a small number of scenarios (Figure~\ref{fig:1_gaptime100}), we see that \textsc{Adaptive} and \textsc{Multi} yield similar results, being faster than \textsc{Single}, with a few cases where \textsc{Adaptive} reaches a smaller gap faster than either \textsc{Multi} or \textsc{Single}, but this is not significant given that most of the instances are solved in no more than 100 seconds. Conversely, for a large number of scenarios (Figure~\ref{fig:1_gaptime10000}), we can see how \textsc{Adaptive} has a positive impact in the early stages, obtaining a lower gap much faster than the other two Benders methods.  

\paragraph{Comparison with Other Methods}

\begin{table}
	\caption{Time Required by Adaptive-cut Benders Versus GAPM and the Deterministic Equivalent Formulation for the SMCF Problem}\label{tab:comparisonGAPM}%
\begin{small}
      \begin{tabular}{cr|rr|rrr|rrr}
            \hline
            &\multicolumn{1}{c}{}& \multicolumn{2}{c}{\textsc{Adaptive}} & \multicolumn{3}{c}{\textsc{GAPM}} & \multicolumn{3}{c}{\textsc{DE}} \\ 
            \hline
            Inst & Scen & {Av. Time} & \#In & \multicolumn{2}{c}{Av. Time} & \#In & \multicolumn{2}{c}{Av. Time}& \#In  \\
            \hline
            \multirow{7}{*}{R04} 
& 16 & 17.1 & 25 & 2.9 & (0.17) & 25 & 1.39 & (0.08) & 25\\
&100   &    28.3  & 25 &    15.7 & (0.54) & 25 &     7.9 & (0.28) & 25 \\
&1000  &   153.3  & 25 &   215.2 & (1.37) & 25 &   103.7 & (0.68) & 25 \\
&5000  &   748.6  & 25 &  1730.5 & (2.18) & 25 &   794.9 & (1.02) & 25 \\
&10000 &  1797.2  & 25 &  5395.8 & (2.68) & 25 &  2482.6 & (1.26) & 25 \\
&20000 &  4538.1  & 25 & 19979.8 & (3.70) & 25 &  9932.7 & (1.71) & 23 \\
&50000 & 18593.0  & 25 & 57845.6 & (2.92) &  8 & 15562.7 & (0.71) &  1 \\
\hline
            \multirow{7}{*}{R05}
& 16 & 66.6 & 25 & 6.7 & (0.12) & 25 & 4.4 & (0.08) & 25\\
&  100 &   102.4  & 25 &    40.5 & (0.51) & 25 &    19.5 & (0.25) & 25 \\
& 1000 &   407.7  & 25 &   535.7 & (1.43) & 25 &   264.8 & (0.71) & 25 \\
& 5000 &  2319.0  & 25 &  4672.2 & (2.05) & 25 &  2790.0 & (1.17) & 25 \\
&10000 &  5583.0  & 25 & 23609.6 & (3.53) & 24 & 10656.2 & (1.75) & 25 \\
&20000 & 16563.1  & 25 & 28052.9 & (2.09) &  2 & 10138.9 & (2.41) &  3 \\
&50000 & 12351.0  & 10 &      -- &  -- &   0 & -- & -- &  0 \\ 
\hline
            \multirow{7}{*}{R06}
& 16 & 288.3 & 25 & 14.5 & (0.08) & 25 & 10.1 & (0.05) & 25\\
&100 &     357.3 & 25 &    90.4 & (0.38) & 25 &     42.7 & (0.18) & 25 \\
&1000 &   1385.4 & 25 &  1015.8 & (0.96) & 25 &    517.9 & (0.49) & 25 \\
&5000 &   7923.0 & 25 & 10942.7 & (1.68) & 25 &   5252.9 & (0.87) & 25 \\
&10000 & 19306.2 & 25 & 12765.6 & (2.99) &  8 &  11793.2 & (1.52) &  8 \\
&20000 & 40475.0 & 19 &  9082.5 & (3.44) &  1 &       -- &     -- &  0 \\
&50000 &  6357.0 &  5 &      -- &     -- &  0 &       -- &     -- &  0 \\
\hline
\end{tabular} \end{small}
\end{table}

In Table~\ref{tab:comparisonGAPM} we compare the performance of \textsc{Adaptive}, \textsc{DE} and \textsc{GAPM}.
The table shows the number of instances solved to optimality for each configuration within a time-limit of 24 hours (\#In), and the average time required to solve these instances (Av. Time) in seconds.  We can see that \textsc{Adaptive} clearly outperforms the other two methods when the number of scenarios is large. In fact, while the other methods require a shorter time for the configurations with 16 and 100 scenarios, \textsc{Adaptive} is between two to four times faster when solving the problems with more than 1,000 scenarios. Moreover, \textsc{Adaptive} can solve all instances from R04 and R05, and almost all instances of the R06 group with up to 20,000 scenarios.  On the contrary, even if \textsc{GAPM} can solve more instances than \textsc{DE}, none of them is able to solve a single instance of R05 and R06 with 50,000 scenarios. This can be explained by the size of the problems that need to be solved for the alternative methods. In fact, even though the GAPM exploits the idea of scenario  aggregation, in the final iterations the problems became too big if the final partition contains a large number of scenarios. Indeed, the final partition required by the GAPM in most of the cases is very large ($97.6\%$ in median), which prevents it from finding the optimal solution. On the contrary, \textsc{Adaptive} manages to deal with such large partitions, as it draws the advantage of decomposition, and does not solve the final partition as a compact model, but uses a dynamic cut generation instead.  This shows that the advantage of \textsc{Adaptive} does not rely on finishing with a small partition (as we saw for the CPP experiments), making this method a promising alternative for a broad class of stochastic problems.

\subsection{Results for the FL-CVaR problem}
\paragraph{Dataset.} We use the classic capacitated warehouse location instances from ORLib~\citep{ORlib} available at \url{http://people.brunel.ac.uk/~mastjjb/jeb/orlib/capinfo.html}. In particular, we tested the algorithm on the instances \emph{cap41-44}, \emph{cap61-64} and \emph{cap71-74} which consider $|I|=16$ facility locations and $|J|=50$ clients. The only differences between these instances are the capacities and the installation costs of the facilities.  For the random demands, we construct scenarios by sampling $d_j^s$ uniformly between 0 and the original demand $d_j$ of each instance. The number of sampled scenarios is $|\mathcal{S}|\in\{100; 500; 1,000; 2,500\}$, and for each case we generate ten instances with different sampled demands. We set the risk-aversion level at $\sigma=90\%$.

\paragraph{Computational implementation.} Since we are dealing with a MIP problem, to avoid the excessive inclusion of cuts during the branch-and-bound process, we only add the corresponding Benders cuts when a new feasible incumbent solution has been found by the solver, i.e., using a \emph{lazy-cut callback}. Also, we keep a global partition $\mathcal{P}$ over the whole branch-and-bound tree, which could be refined by the new incumbent solutions found on different branches. 

\paragraph{Results.} We compare the different Benders strategies (\textsc{Single}, \textsc{Multi} and \textsc{Adaptive}) with the deterministic equivalent (\textsc{DE}) formulation of the problem. As before, we exclude \textsc{Adaptive-Single} due to its poor performance. Figure~\ref{fig:FLCVAR_pp} shows the cumulative performance charts of the different strategies for the different number of scenarios. Each line shows the percentage of the 120 instances solved to optimality over time. More details are also presented in Table~\ref{tab:FLCVAR}.

\begin{figure}
\includegraphics[width=0.9\textwidth]{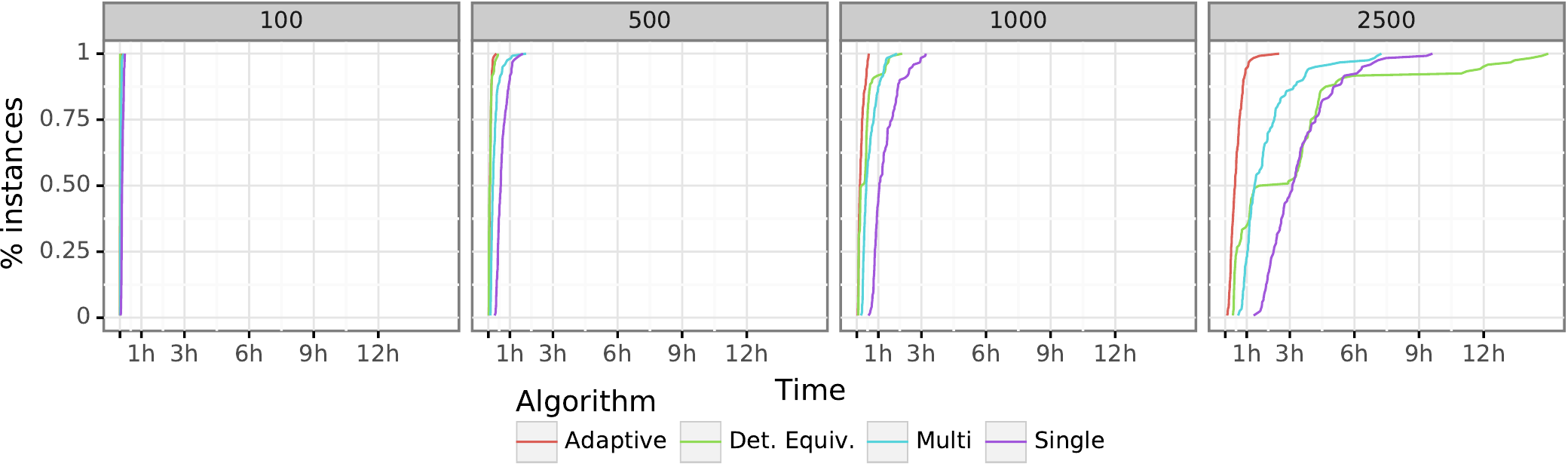}
\caption{Cumulative Performance Charts for Different Number of Scenarios for FL-CVaR problem\label{fig:FLCVAR_pp}}
\end{figure}

\begin{table}
\caption{Average Solving Times and Number of Cuts  of Each Method for Different Instances and Numbers of Scenarios for FL-CVaR problems\label{tab:FLCVAR}}%

{\begin{adjustbox}{width=\textwidth,center}
\begin{tabular}{lr|rrr|rlr|rlr|rl}
            \hline
            & & \multicolumn{3}{c}{\textsc{Adaptive}} & \multicolumn{3}{c}{\textsc{Multi}} & \multicolumn{3}{c}{\textsc{Single}} & \multicolumn{2}{c}{\textsc{DE}} \\
            \hline
            Inst & $|\mathcal{S}|$ & Time & \#Ref & OC &  \multicolumn{2}{c}{Time}& OC & \multicolumn{2}{c}{Time}& OC & \multicolumn{2}{c}{Time} \\\hline
            \multirow{4}{*}{cap4x} 
     & 100  &       81.0 &      8.6 &      14726 &      201.8 &  (2.73) &      24056 &      421.5 &   (5.57) &       2856 &       13.2 &    (0.20) \\
      & 500  &      361.5 &     15.1 &      53183 &     1223.9 &   (3.30) &     129223 &     2285.7 &   (6.68) &       3371 &      105.3 &   (0.33) \\
      & 1000 &      823.1 &     18.2 &     106140 &     2363.9 &  (3.04) &     229660 &     5037.7 &   (6.82) &       3643 &      311.0 &   (0.46) \\
      & 2500 &     2020.4 &     23.7 &     208466 &     7203.3 &  (3.47) &     587637 &    13013.6 &    (6.50) &       3818 &     1776.8 &   (0.94) \\ \hline
      \multirow{4}{*}{cap6x}
      & 100  &       63.9 &      9.0 &      10602 &      181.5 &  (2.77) &      23881 &      495.6 &    (8.10) &       3223 &       22.6 &   (0.38) \\
      & 500  &      412.5 &     16.0 &      49268 &     1292.4 &  (2.87) &     125685 &     2668.0 &   (7.09) &       3745 &      338.2 &   (0.88) \\
      & 1000 &      937.4 &     19.1 &      94975 &     2641.1 &  (2.79) &     235090 &     5508.7 &   (6.13) &       3785 &     1404.3 &   (1.54) \\
      & 2500 &     2449.7 &     24.4 &     190433 &     9020.4 &  (3.42) &     587653 &    13861.3 &   (5.87) &       3612 &    11667.9 &   (4.92) \\ \hline
      \multirow{4}{*}{cap7x}
      & 100  &       28.8 &      9.2 &       5345 &       90.6 &  (3.25) &      13696 &      337.9 &  (12.2) &       2417 &       27.2 &   (0.95) \\
      & 500  &      168.1 &     16.0 &      22435 &      580.6 &  (3.57) &      69124 &     1875.8 &  (11.5) &       2635 &      550.2 &   (2.74) \\
      & 1000 &      354.2 &     20.0 &      42240 &     1319.3 &  (3.88) &     138972 &     3452.2 &  (10.1) &       2627 &     2453.0 &   (5.56) \\
      & 2500 &     1098.1 &     24.6 &      95468 &     4074.4 &  (4.04) &     324071 &    10227.2 &   (9.95) &       2956 &    19975.8 &  (14.4)\\\hline
\end{tabular}\end{adjustbox}}
\end{table}

When $|\mathcal{S}|=100$, all strategies perform similarly solving all instances in less than 10 minutes. In this case, \textsc{DE} is faster than all Benders strategies. As expected, when the number of scenarios increases,  \textsc{DE} becomes slower requiring considerably more time than Benders strategies to solve all instances, in particular for $|\mathcal{S}|=2,500$.  Interestingly, this increment on the solving times depends on the instance family. While \textsc{DE} is very efficient for \emph{cap41-cap44}, even with a large number of scenarios, its performance is the worst among all methods for \emph{cap71-cap74} when $|\mathcal{S}|=2,500$.

Among the different Benders strategies, \textsc{Single} is the slowest and \textsc{Adaptive} is the fastest for all instances and all the number of scenarios studied. It can be seen that the number of optimality cuts added by \textsc{Single} is smaller, which can explain the slow convergence to the optimal solution. On the other hand, \textsc{Adaptive} requires approximately half of the number of optimality cuts required by \textsc{Multi}, so the master problems solved at each iteration are smaller, which can explain the better performance of \textsc{Adaptive}. Note that compared with the other two problem studied above, \textsc{Adaptive} requires more refinements of the partition to reach the optimal solution. This can be explained by the binary nature of the first-stage decision, because different incumbent solutions found during the branch-and-bound process of the solver refine the partition independently. Nevertheless, the size of the final partition does not grow too much. In fact, the final partition needed by \textsc{Adaptive} to prove the optimality of the solution ranges between $\approx 22\%$ of $|\mathcal{S}|$ for 100 scenarios to $\approx 13\%$ for 2,500 scenarios. This is somehow expected, because given the $\sigma=90\%$ risk-level of the CVaR objective, there are roughly speaking $\approx 10\%$ of scenarios that are the most relevant ones for finding the optimal solution.

To show the relevance of considering a large number of scenarios for this problem, we analyze the solutions found by the model. Recall that we sample ten different problems for each instance. When $|S|=100$, almost all instances obtain different optimal solutions (up to 4 different optimal solutions for \emph{cap41-cap44}). On the contrary, when $|S|=2,500$, almost all instances obtain the same optimal solution for the ten samples of the customers' demand, showing that a large number of scenarios is required to correctly model the problem. A similar behavior is observed with respect to the objective value of the optimal solution found. Figure~\ref{fig:objval_FLCVAR} shows the dispersion of the objective value among the ten sampled demands for each instance and for the different number of scenarios $|\mathcal{S}|$. We conclude that adding more scenarios is not only required to  consistently obtain the same optimal solution, but it is also needed to obtain a better estimation of the true objective value of these solutions.  

\begin{figure}
\includegraphics[width=0.9\textwidth]{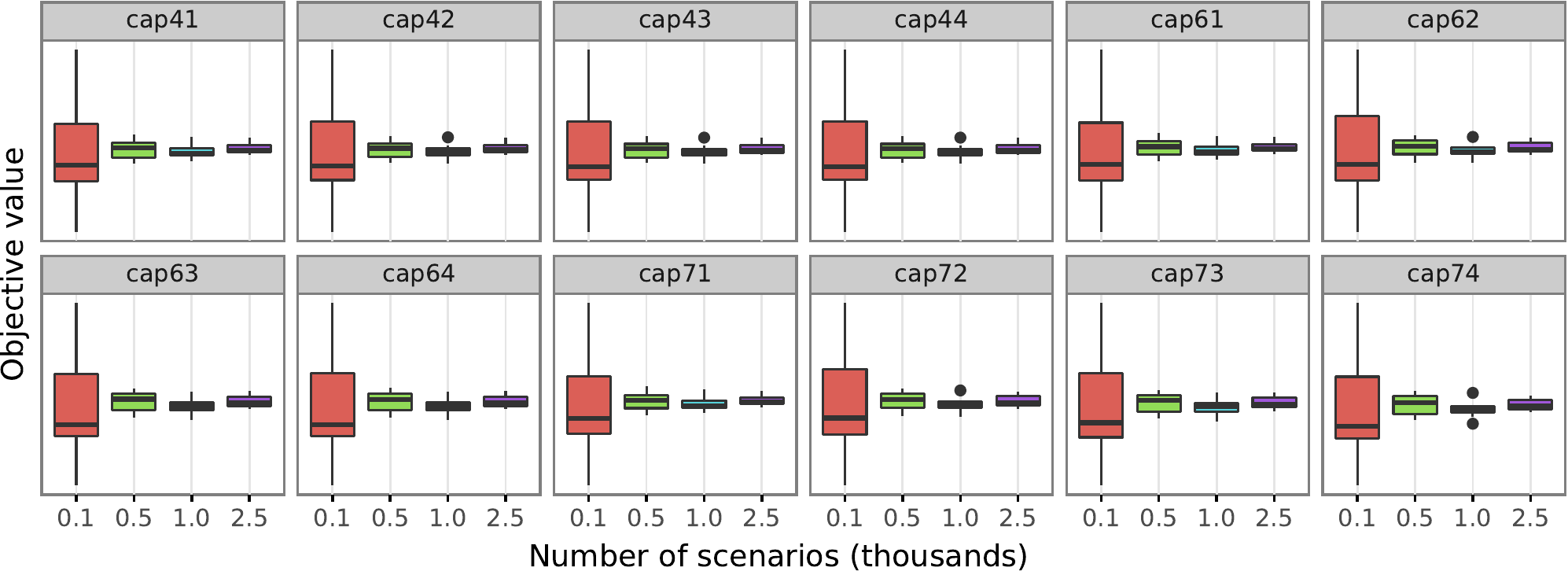}
\caption{Optimal objective value for FL-CVaR problems\label{fig:objval_FLCVAR}}
\end{figure}

\section{Concluding Remarks}\label{sec:conc}

We proposed a new  Benders adaptive-cuts method based on the GAPM for two-stage stochastic problems. 
We conducted an extensive computational analysis to highlight both the strengths and weaknesses of the proposed approach,  by focusing on stochastic network flow problems. The results show that the performance of Benders adaptive-cuts is considerably better than that of its two counterparts based on a separation of multi-cuts and  single-cuts, respectively. This superior performance is particularly pronounced in the early iterations of the cutting plane algorithms. For the stochastic multicommodity flow problem, the Benders adaptive-cuts method tends to perform similarly to Benders multi-cuts in the long term because proving the optimality frequently requires almost complete disaggregation of the set of scenarios. This does not occur for the capacity planning problem, where the  Benders adaptive-cuts method significantly outperforms the other two variants. Moreover, for very large instances of the CPP with a million of scenarios,  the final partition does not exceed 2\% of the total number of scenarios. 
For the FL-CVaR problem, Benders adaptive-cuts method can be an order of magnitude faster than the deterministic equivalent and the single-cuts approach, and it significantly reduces the number of cuts and the solving times of the multi-cuts approach as well, even for a small to moderate number of scenarios. 

Overall, the Benders adaptive-cuts method is shown to outperform its multi-cuts and single-cuts counterparts, due to to the following major factors: (1) its ability
to generate violated optimality and feasibility cuts faster than the other two methods (due to the aggregation of scenarios, particularly in the initial iterations), and (2)
the fewer number of cuts required to reach the optimal solution. The latter effect is amplified for problems whose size of the final partition remains relatively small compared to the total number of scenarios. 

We recall that in our study we did not apply any of the standard Benders decomposition algorithmic enhancements, leaving open to the reader the possibility to apply our methodology to particular problems where acceleration techniques can increase the global efficiency of Benders decomposition. 
Several improvements can be applied to keep a partition of small size. For example, using dual stabilization or a similar technique to obtain similar duals in the case of degenerated problems, or considering a small tolerance between duals to group them, or re-constructing the partition based on the last first-stage solutions obtained by the algorithm. 
When it comes to extending theory and methodology of our results, it would be interesting to study convex two-stage stochastic optimization problems (notably, with a non-linear but convex objective function in the recourse).

\bibliographystyle{plainnat}
\bibliography{biblio_benders_gapm.bib}
\end{document}